\begin{document}

\theoremstyle{plain}
	\newtheorem{Pp}{Proposition}[section]
	\newtheorem{Thm}[Pp]{Theorem}
	\newtheorem{Lm}[Pp]{Lemma}
	\newtheorem{Cor}[Pp]{Corollary}
\theoremstyle{definition}
	\newtheorem{Df}[Pp]{Definition}
	\newtheorem{Cond}[Pp]{Condition}
	\newtheorem{Ass}[Pp]{Assumption}
	\newtheorem{Rm}[Pp]{Remark}
	\newtheorem{Emp}[Pp]{}

\title[Geometric fiber lay-down and convergence to equilibrium]{Geometry, mixing properties and hypocoercivity of a degenerate diffusion arising in technical textile industry}

\author[M.~Grothaus]{Martin Grothaus}
\address{
Martin Grothaus, Mathematics Department, University of Kaiserslautern, \newline 
P.O.Box 3049, 67653 Kaiserslautern, Germany.\newline 
{\rm \texttt{Email:~grothaus@mathematik.uni-kl.de}},\newline
Functional Analysis and Stochastic Analysis Group, \newline
{\rm \texttt{URL:~http://www.mathematik.uni-kl.de/$\sim$wwwfktn/ }}} 

\author[A.~Klar]{Axel Klar}
\address{
Axel Klar, Mathematics Department, University of Kaiserslautern, \newline 
P.O.Box 3049, 67653 Kaiserslautern, Germany.\newline 
{\rm \texttt{Email:~klar@mathematik.uni-kl.de}},\newline
Technomathematics Group, \newline
{\rm \texttt{URL:~http://wwwagtm.mathematik.uni-kl.de/agtm/ }}} 

\author[J.~Maringer]{Johannes Maringer}
\address{
Johannes Maringer, Mathematics Department, University of Kaiserslautern, \newline 
P.O.Box 3049, 67653 Kaiserslautern, Germany.\newline 
{\rm \texttt{Email:~maringer@itwm.fraunhofer.de}},\newline
Technomathematics Group, \newline
{\rm \texttt{URL:~http://wwwagtm.mathematik.uni-kl.de/agtm/ }}} 

\author[P.~Stilgenbauer]{Patrik Stilgenbauer}
\address{
Patrik Stilgenbauer, Mathematics Department, University of Kaiserslautern, \newline
P.O.Box 3049, 67653 Kaiserslautern, Germany. \newline
{\rm \texttt{Email:~stilgenb@mathematik.uni-kl.de}}, \newline
Functional Analysis and Stochastic Analysis Group, \newline
{\rm \texttt{URL:~http://www.mathematik.uni-kl.de/$\sim$wwwfktn/ }}}

\date{\today}

\subjclass[2000]{Primary 37A25; Secondary: 58J65; Secondary; 47A35; Secondary 60H30; Secondary 60D05;}

\keywords{Convergence to equilibrium; Strong mixing property; Hypocoercivity; Exponential rate of convergence; Ergodicity; Stratonovich SDEs on manifolds; Fiber dynamics; Geometric modeling; Hypoelliptic operator; Degenerate diffusion}

\begin{abstract}
We study a stochastic equation modeling the lay-down of fibers in the production process of nonwovens. The equation can be formulated as some manifold-valued Stratonovich stochastic differential equation living on $\mathbb{R}^d \times \mathbb{S}^{d-1}$, $d \geq 2$. Especially, we study the long time behaviour of the stochastic process. Demanding mathematical difficulties arising due to the degeneracity of the lay-down equation and its associated generator. We prove strong mixing properties by making use of the hypoellipticity of the generator and a new version of Doob's theorem derived recently in \cite{GN12}. Moreover, we show convergence to equilibrium exponentially fast with explicitly computable rate of convergence. This analytic approach uses powerful modern Hilbert space methods from the theory of hypocoercivity developed in \cite{DMS11}. Summarizing, we give interesting mathematical applications of geometric stochastic analysis to real world problems.
\end{abstract}

\maketitle

\section{Introduction} \label{Introduction}

This article is about the mathematical analysis of new fiber lay-down equations developed recently in \cite{KMW12}. Herein fiber lay-down processes arise in the production process of nonwovens and the expression is used for the description of the forms generated by the stochastic lay-down of flexible fibers. The understanding, optimization and mathematical simulation of such fiber webs is of great industrial interest, we refer to \cite{KMW09} and references therein. Areas of application where these stochastic lay-down processes can be observed include e.g.~composite materials (filters), textiles, as well as the hygiene industry. In \cite{MW06} a general mathematical model describing the full fiber spinning process is introduced and is nowadays implemented in the software tool FYDIST developed at the Fraunhofer ITWM, Kaiserslautern. Nevertheless, the numerical simulation leads to excessively large computation times. Hence it would be desirable to have simplified stochastic models at hand simulating a virtual fiber web in a fast and efficient way. The first of such surrogate models, the basic two-dimensional model, is developed in \cite{GKMW07} and reads as some It\^{o} stochastic differential equation (SDE) in $\mathbb{R}^3$ of the form
\begin{align} \label{eq_2D_fiber_lay_down} 
&\mathrm{d}\xi_t = \tau(\alpha_t)\, \mathrm{dt}\\
&\mathrm{d}\alpha_t = - \nabla \Phi (\xi_t) \cdot \tau^\bot(\alpha_t)\, \mathrm{dt} + \sigma \, \mathrm{d}W_t. \nonumber
\end{align}
Here $\tau(\alpha)=(\cos(\alpha),\sin(\alpha))^T$, $\tau^\bot = \frac{\partial \tau}{\partial \alpha}$, $\sigma$ is a nonnegative constant and $W$ is a standard one-dimensional Brownian motion. $\Phi:\mathbb{R}^2 \rightarrow \mathbb{R}$ is a suitable function called the potential. This basic model has been extended in \cite{KMW12} to the more realistic three-dimensional case which serves as starting point for all our mathematical studies in the underlying article. And exactly this extension to the three-dimensional case requires the usage of a differentialgeometric language.
\medskip

At first we discuss the geometry underlying the previously mentioned equation introduced in \cite{KMW12}. Starting with the two-dimensional model \eqref{eq_2D_fiber_lay_down}  we present a new differential geometric derivation and formulation of the model from \cite{KMW12} in each dimension $d \in \mathbb{N}$, $d \geq 2$. Therefore we formulate Equation \eqref{eq_2D_fiber_lay_down} first in its most natural way on $\mathbb{R}^2 \times \mathbb{R} / {2 \pi \mathbb{Z}}$. The latter space is diffeomorphic to $\mathbb{R}^2 \times \mathbb{S}^1$ where $\mathbb{S}$ stands as usual for the unit sphere. Consequently, we get the analogue equation of \eqref{eq_2D_fiber_lay_down} living on $\mathbb{R}^2 \times \mathbb{S}^1$. The resulting equation can then directly be translated to higher dimensions and yields the final $d$-dimensional fiber lay-down model given as some manifold-valued Stratonovich SDE with state space $\mathbb{R}^d \times \mathbb{S}^{d-1}$ by
\begin{align} \label{Fiber_Model_Intro}
&\mathrm{d}\xi_t = v_t \, \mathrm{dt} \\
&\mathrm{d}v_t = - (I- v_t \otimes v_t) \nabla \Phi (\xi_t) \, \mathrm{dt} + \sigma\,(I-v_t \otimes v_t) \circ \mathrm{d}W_t. \nonumber
\end{align}
Here $x \otimes y = x y^T$, $\Phi \in C^\infty(\mathbb{R}^d)$ and $W$ is a standard $d$-dimensional Brownian motion, see Section \ref{Differentialgeometric_modeling} for details. This precise fomulation is also necessary for all the forthcoming analysis and we strongly believe that the geometric derivation can be helpful for every applied mathematician who aims to derive similiar manifold-valued stochastic equations. Furthermore, we provide concret numerical simulation formulas of \eqref{Fiber_Model_Intro} in specific local coordinate systems, so called local SDEs. As consequence, for $d=2$ we obtain back \eqref{eq_2D_fiber_lay_down} and for $d=3$ the local SDE reduces to the three-dimensional model derived in \cite{GKMW07}. We finish the geometric discussion in Section \ref{solutions_SDEs}. Therein we further discuss basic existence statements of all occuring stochastic equations by constructing global solutions to the basic two-dimensional model \eqref{eq_2D_fiber_lay_down} with state space $\mathbb{R}^3$ and the general geometric fiber lay-down model \eqref{Fiber_Model_Intro} living on $\mathbb{R}^d \times \mathbb{S}^{d-1}$. 
\medskip

Of course, the cases $d=2$ and $d=3$ are the physical relevant ones. Nevertheless, for all the forthcoming analysis, it is elegant to study Equation \eqref{Fiber_Model_Intro} in its general mathematical form. We emphasize that our upcoming stochastic and functionalanalytic considerations of the $d$-dimensional fiber lay-down model are done in a coordinate free way.
\medskip

The density of the stochastic process solving \eqref{Fiber_Model_Intro} satisfes the associated Fokker-Planck evolution equation. An important criterion for the quality of the fiber web and the resulting nonwoven material is how fast the process converges towards its stationary state. Of essential interest is therefore the speed of convergence. Moreover, from a practical point of view, process parameters should be adjusted in order to obtain optimal convergence to equilbrium.
\medskip

This motivates that we devote the main interest in the underlying paper to the study of the long-time behaviour of the $d$-dimensional fiber lay-down equation. Demanding mathematical difficulties are then occuring due to the degeneracity of \eqref{Fiber_Model_Intro}. The convergence to equilibrium of the \glqq flat\grqq~two-dimensional fiber lay-down model is already analyzed in several articles. Basing upon the theory of Dirichlet forms and operator semigroups, the approach of \cite{GK08} gives an ergodic theorem and establishes explicit rates of convergence. The underlying object of study therein is the two-dimensional fiber lay-down Kolmogorov PDE and its corresponding Kolmogorov operator. Another approach is presented in \cite{DKMS11}. There the authors analyze the two-dimensional fiber lay-down Fokker-Planck PDE together with its associated generator and prove convergence to equilibrium with an exponential, explicitly computable rate of convergence. This approach uses modern methods from \cite{DMS11} in which a new hypocoercivity theory in a Hilbert space setting is developed. For a general study of the theory of hypocoercivity, the reader may consult \cite{Vil09}. Finally, the quite recently published article \cite{KSW11} uses a probabilistic approach. The authors are able to derive strong mixing properties under weak assumptions on the potential and, assuming some stronger conditions, they can prove geometric ergodicity of the two-dimensional fiber lay-down process.
\medskip

In Section \ref{section_mixing_properties} we start studying the long time behaviour of our general, geometric $d$-dimensional fiber lay-down process  by proving strong mixing properties, see Theorem \ref{Thm_strong_mixing}. The approach is disjoint and different to the one given in the two-dimensional setting in \cite{KSW11}, and moreover, it has even been developed at the same time independently. Our approach works for smooth potentials $\Phi$. This is a slightly stronger assumption as made in \cite{KSW11} in the two-dimensional case. Nevertheless, our strategy applies to the full manifold-valued $d$-dimensional fiber lay-down SDE containing especially the two-dimensional situation. We make use of a new version of Doob's theorem, see \cite{GN12}, which perfectly fits into the fiber lay-down setting.
\medskip

Afterwards, we switch to functional analysis and apply the fascinating and powerful hypocoercivity theory from \cite{DMS11} once more, see Section \ref{section_Hypocoercivity}. We generalize the strategy in \cite{DKMS11} to the $d$-dimensional setting requiring some differential geometric tools. The object of interest in this section is the hypoelliptic Kolmogorov operator associated to \eqref{Fiber_Model_Intro} given by
\begin{align*}
L&= v \cdot \nabla_\xi - (I-v \otimes v) \nabla \Phi(\xi) \cdot \nabla_v + \frac{\sigma^2}{2}  \, \Delta_{\mathbb{S}^{d-1}}
\end{align*}
which is analyzed in an appropriately chosen $L^2$-space. In particular, we make use of modern entropy methods. We obtain convergence to equilibrium exponentially fast with explicitly computable rate of convergence, see Theorem \ref{Thm_Hypocoercivity}.
\medskip

Moreover, in Section \ref{Setup_and_Notations} we introduce some basic geometric language and give a short introduction to the concept of manifold-valued Stratonovich SDEs. Further useful statements needed for our analysis are proven in the Appendix.
\medskip

Finally, the progress achieved in this paper may be summarized by the following list of main results:
\begin{itemize}
\item
Differential geometric derivation and formulation of the fiber lay-down model in each dimension as some manifold-valued Stratonovich SDE.
\item
Construction of strong solutions to all occuring fiber lay-down equations.
\item
Strong mixing properties of the geometric fiber lay-down process under weak assumptions on the potential, see Theorem \ref{Thm_strong_mixing}, by making use of a new version of Doob's theorem proven in \cite{GN12}.
\item
Exponential convergence to equilibrium of the geometric fiber lay-down process towards a unique stationary state with explicitly computable rate of convergence, see Theorem \ref{Thm_Hypocoercivity}. Here we use modern Hilbert space methods from the theory of hypocoercivity developed in \cite{DMS11}.
\end{itemize}

\section{Setup and Notations} \label{Setup_and_Notations}

Before starting we introduce some notations, fix the language concerning the $d$-sphere and give a short introduction to the concept of Stratonovich stochastic differential equations (SDEs) on manifolds. Until the end of this article we follow the notations and language introduced in the underlying section without further mention this again. 
\medskip

$C^\infty(\mathbb{X})$ denotes the set of all infinitely often differentiable functions $f: \mathbb{X} \rightarrow \mathbb{R}$ on some differentiable manifold $\mathbb{X}$. The index $c$ indicates compact support.  $\nabla$ (or also denoted by $\nabla_x$ or $\nabla_{\mathbb{R}^d}$) always denotes the usual gradient operator in $\mathbb{R}^d$, $d \in \mathbb{N}$, (with respect to the variable $x$)  as column vector. $| \cdot|$ is the standard euclidean norm. The standard euclidean scalar product is simply denoted by $\cdot$ or also by $\left( \cdot, \cdot \right)_{\text{euc}}$. Superscript $T$ denotes the transpose of some matrix. The expression smooth means that the underlying object is of class $C^\infty$. $I$ is the identity matrix. Partial derivatives in $\mathbb{R}^d$ with respect to some variable $x$ are denoted as usual by $\frac{\partial}{\partial x}$ or for short by $\partial_x$. Convention: Any vector $x \in \mathbb{R}^d$ is always understood as column vector. And the notation $(x,y)$ for $x \in \mathbb{R}^d$, $y \in \mathbb{R}^k$, is understood as column vector.
\medskip

Next let us fix some geometric language. Let $d \in \mathbb{N}$. We consider the $d$-sphere given by  $\mathbb{S}^d= \{ v \in \mathbb{R}^{d+1}~|~|v|^2 =1 \}$. The algebraic tangent space $T_v\mathbb{S}^d$ at the point $v \in \mathbb{S}^d$ can naturally be embedded into $\mathbb{R}^{d+1}$. Under this identification any $\mathbb{R}$-derivation $D \in T_v\mathbb{S}^d$, $v \in \mathbb{S}^d$, corresponds to some $A \in \mathbb{R}^{d+1}$ with $ \left( A , v \right)_{\text{euc}} =0$. We canonically identify $D$ and $A$, in notation $D \equiv A$. For $f \in C^\infty(\mathbb{S}^d)$ it holds $Df=A \cdot \nabla_{\mathbb{R}^{d+1}} \widetilde{f}(v)$ where $\widetilde{f}$ is any smooth extension of $f$ defined in an open neighbourhood in $\mathbb{R}^{d+1}$ of $v$. So this justifies the notation $Af=A \cdot \nabla_v f(v)$ instead of $Df$. Moreover, for a given smooth vector field $\mathcal{A}$ on $\mathbb{S}^d$ defined by $\mathbb{S}^d \ni v \mapsto \mathcal{A}(v) \in T_v\mathbb{S}^d$, we write $\mathcal{A}f(v)=\mathcal{A}(v)f$, $f \in C^\infty(\mathbb{S}^d)$. The spherical gradient of such an $f$ is denoted by $\text{grad}_{\mathbb{S}^d}f$. One has 
\begin{align*}
\text{grad}_{\mathbb{S}^d}f(v)= (I-v \otimes v) \,\nabla_{\mathbb{R}^{d+1}} \widetilde{f} (v),~f \in C^\infty(\mathbb{S}^d),~v \in \mathbb{S}^d.
\end{align*}
Here $x \otimes y:=x y^T$, $x,y \in \mathbb{R}^{d+1}$ and $\widetilde{f}$ is chosen as before. This definition is again independent of the local smooth extension $\widetilde{f}$ for $f$. In short notation we also write  $\text{grad}_{\mathbb{S}^d}f(v)= (I-v \otimes v) \,\nabla_{v} f(v)$ for $f \in C^\infty(\mathbb{S}^d)$ and $v \in \mathbb{S}^d$.

\subsection{Stratonovich SDEs on manifolds.} Now let $\mathbb{X}$ be a $C^\infty$-manifold, assumed to be second-countable and Hausdorff. $\hat{\mathbb{X}}=\mathbb{X} \cup \{ \Delta \}$ is the one-point compactification. Our main reference is \cite{Hsu02}. Consider also the excellent german book \cite{HT94}. 
\medskip

\noindent \textit{Solution concept}. See \cite[Def.~7.41]{HT94} or \cite[Ch.~V]{IW89}. Let $(\Omega, \mathcal{F}, \mathbb{P}, \{\mathcal{F}_t\}_{t \geq 0})$ be a standard filtered probability space equipped with an $r$-dimensional standard $\{\mathcal{F}_t\}_{t \geq 0}$\,-\,Brownian motion $W=\{W_t\}_{t \geq 0}$. Let $\mathcal{V}_0,~\mathcal{V}_1, \ldots, \mathcal{V}_r$ be smooth vector fields on $\mathbb{X}$ and let $x_0: \Omega \rightarrow \mathbb{X}$ be $\mathcal{F}_0$-measurable. A solution $X=\{X_t\}_{t \geq 0}$ of the Stratonovich stochastic differential equation
\begin{align} \label{Df_Stratonovich_equation}
\mathrm{d}X_t = \mathcal{V}_0 (X_t) \,\mathrm{dt} + \sum_{j=1}^r \mathcal{V}_j (X_t) \circ \mathrm{d}W_t^{(j)}
\end{align}
with initial condition $X_0=x_0$ is any $\{\mathcal{F}_t\}_{t \geq 0}$\,-\,adapted, continuous process on $\hat{\mathbb{X}}$ having $\Delta$ as a trap such that the following is satisfied: $X_0=x_0$ $\mathbb{P}${-a.s.}~and for every $f \in C_{c}^\infty(\mathbb{X})$, 
\begin{align*}
f(X_t) - f(X_0) = \int_0^t (\mathcal{V}_0 f) (X_s) \, \mathrm{d}s + \sum_{j=1}^r  \int_0^t (\mathcal{V}_j f) (X_s) \circ \mathrm{d}W_s^{(j)},~t \geq 0.
\end{align*}
We call $e(X):= \inf_{t \geq 0} \{ X_t = \Delta \}$ the explosion time or lifetime of $X$. Herein $f(\Delta):=0$ for $f \in C_{c}^\infty(\mathbb{X})$. Note that the definition of a solution $X$ is always understood relative to $(\Omega, \mathcal{F}, \mathbb{P}, \{\mathcal{F}_t\}_{t \geq 0}, W)$. In short form we write: $X$ is a solution to $\text{SDE}(\mathcal{V}_0,\ldots,\mathcal{V}_r;W,x_0)$. Finally, consider \cite[Def.~1.2.3]{Hsu02} for an equivalent definition involving test functions from $C^\infty(\mathbb{X})$.
\medskip

\noindent \textit{State space transform}. See \cite[Prop.~1.2.4]{Hsu02}. Let $\mathbb{X}$ and $\widetilde{\mathbb{X}}$ be diffeomorphic with diffeomorphism $\varphi:\mathbb{X} \rightarrow \widetilde{\mathbb{X}}$. Then $X=\{ X_t \}_{t \geq 0}$ solves SDE \eqref{Df_Stratonovich_equation} iff $\widetilde{X}=\{\widetilde{X}_t\}_{ t\geq 0}$, where $\widetilde{X}_t:= \varphi(X_t)$, solves its associated SDE on $\widetilde{\mathbb{X}}$ in which the $\mathcal{V}_j$ are replaced by their corresponding pushforward vector fields $\widetilde{\mathcal{V}_j}$ on $\widetilde{\mathbb{X}}$. Clearly it holds $(\widetilde{\mathcal{V}_j} \widetilde{f})(\widetilde{p})=\mathcal{V}_jf(p)$ where $\widetilde{f} \circ \varphi =f$, $\widetilde{p}= \varphi(p)$ for $f \in C^\infty(\mathbb{X}),~p \in \mathbb{X}$. The SDEs on $\mathbb{X}$ and $\widetilde{\mathbb{X}}$ are called equivalent.
\medskip

\noindent \textit{Generator of a Stratonovich SDE.} See \cite[Sec.~1.3]{Hsu02}. Any solution $X=\{ X_t \}_{t \geq 0}$ of the Stratonovich SDE \eqref{Df_Stratonovich_equation} is an $L$-diffusion process generated by the second order H"{o}rmander type operator $L:C^\infty(\mathbb{X}) \rightarrow C^\infty(\mathbb{X})$ given by $L=\mathcal{V}_0 + \frac{1}{2} \sum_{j=1}^r \mathcal{V}_j^2$. Two such solutions $X$, $Y$ (possibly defined on different probability spaces) of \eqref{Df_Stratonovich_equation} with the same initial law weakly coincide, i.e., they induce the same law on the path space $W(\mathbb{X})$. Here $W(\mathbb{X})$ is the set of all continuous paths $\omega:[0,\infty) \rightarrow \hat{\mathbb{X}}$ satisfying $\omega(s)=\Delta$ for all $s \geq t$ whenever $\omega(t)=\Delta$. $W(\mathbb{X})$ is equipped with the canonical filtration.
\medskip

\noindent \textit{Brownian motion on Riemannian manifolds.} See \cite[Sec.~3.2]{Hsu02}. In case $(\mathbb{M},g)$ is a Riemannian manifold, any $\frac{1}{2} \Delta_\mathbb{M}$-diffusion process is called a Brownian motion on $\mathbb{M}$. Thus a Brownian motion can be generated by a Stratonovich SDE whose generator is equal to $\frac{1}{2} \Delta_\mathbb{M}$. Here $\Delta_\mathbb{M}$ denotes the Laplace-Beltrami on $(\mathbb{M},g)$.

\section{The fiber lay-down geometry} \label{Differentialgeometric_modeling}

The three-dimensional fiber lay-down model is already presented in \cite{KMW12}. Consider the latter for further interpretation. In this section we give a second view by view. As described in the introduction we now present a new differentialgeometric derivation and formulation of the model in each dimension motivated by the original ideas from \cite{KMW12}. By the way, this shows up a fascinating interaction between applied and pure mathematics and illustrates the highly geometric nature of our model. We make use of the notations and the geometric language introduced in Section \ref{Setup_and_Notations}. For further background information in stochastic geometry, we again refer to \cite{Hsu02}. 

\subsection{The two-dimensional fiber lay-down SDE on \texorpdfstring{$\mathbb{R}^3$}{}} 
We start with the basic two-dimensional model from \cite{GKMW07} which describes the lay-down of a single fiber as a curve $\xi_t,~t \geq 0,$ in the two-dimensional plane. The model is formulated in the introduction, see Equation \eqref{eq_2D_fiber_lay_down}. Therein, note that $\sigma  \, \mathrm{d}W_t = \sigma  \circ \mathrm{d}W_t$. Here $\circ$ signifies the Stratonovich integral.
\medskip

Let $\widetilde{X}=\{\widetilde{X}_t\}_{t \geq 0}$ be the strong solution of SDE \eqref{eq_2D_fiber_lay_down} associated to some fixed standard one-dimensional Brownian motion $W=\{W_t\}_{t \geq 0}$. We use the notation $\widetilde{X}_t=(\xi_t, \widetilde{\alpha_t})$, $t \geq 0$. In Section \ref{solutions_SDEs} we will see that a strong solution solution exists whenever $\Phi \in C^\infty(\mathbb{R}^2)$ although the drift vector of the SDE is not globally Lipschitz continuous in this case. 

\subsection{The two-dimensional model on \texorpdfstring{$\mathbb{R}^2 \times \mathbb{T}$}{}} \label{2d_model_torus} 
The third component of the SDE \eqref{eq_2D_fiber_lay_down} has the interpretation of being an angle. Henceforth, it is natural to formulate the previous SDE on $\mathbb{R}^2 \times \mathbb{T}$ with $\mathbb{T}:=\mathbb{R} / {2 \pi \mathbb{Z}}$. This can be done as follows. Consider the canonical projection $\mathrm{P}: \mathbb{R}^3 \rightarrow \mathbb{R}^2 \times \mathbb{T}$ mapping $(\xi,\widetilde{\alpha})$ to $(\xi,\alpha)$ where $\alpha:=[\widetilde{\alpha}]$. Then we introduce the process $X=\{X_t\}_{t \geq 0}$ defined by $X_t:= \mathrm{P}(\widetilde{X}_t)$, $t \geq 0$. Herein $\widetilde{X}$ is the strong solution associated to \eqref{eq_2D_fiber_lay_down} defined previously. We write $X_t=(\xi_t, \alpha_t)$, $\alpha_t = [\widetilde{\alpha_t}]$. We claim that $X$ solves the Stratonovich SDE 
\begin{align} \label{eq_2D_fiber_lay_down_manifold}
\mathrm{d}X_t = \mathcal{A}_0 (X_t) \,\mathrm{dt} + \mathcal{A}_1 (X_t) \circ \mathrm{d}W_t
\end{align} 
on the manifold $\mathbb{R}^2 \times \mathbb{T}$ with vector fields $\mathcal{A}_0$, $\mathcal{A}_1$ given by
\begin{align} \label{Df_vector_fields_A_0_A_1}
&\mathcal{A}_0 := \tau \cdot \nabla_\xi  - \nabla_\xi \Phi  \cdot \tau^{\bot} \, \frac{\partial}{\partial \alpha} ,~\mathcal{A}_1 := \sigma \, \frac{\partial}{\partial \alpha}.
\end{align}
Indeed, let $f \in C^\infty_c(\mathbb{R}^2 \times \mathbb{T})$. In particular, $\widetilde{f}:= f \circ \mathrm{P} \in C^\infty (\mathbb{R}^3)$. Furthermore, note
\begin{align*}
\nabla_\xi \widetilde{f} (\widetilde{p}) = \nabla_\xi f (p),~\frac{\partial \widetilde{f}}{\partial \widetilde{\alpha}}(\widetilde{p}) = \frac{\partial f}{\partial \alpha} (p),~\widetilde{p}=(\xi,\widetilde{\alpha}) \in \mathbb{R}^3,~p=\mathrm{P}(\widetilde{p}).
\end{align*} 
Thus the Stratonovich transformation rule (see~e.g.~\cite{Hsu02}) yields the claim since
\begin{align*}
& f(X_t) - f(X_0) = \widetilde{f}(\widetilde{X_t}) - \widetilde{f}(\widetilde{X_0}) \\
&= \int_0^t \Big( \tau(\widetilde{\alpha_s}) \cdot \nabla_\xi \widetilde{f}(\widetilde{X_s})  - \nabla \Phi (\xi_s) \cdot \tau^{\bot}(\widetilde{\alpha_s}) \, \frac{\partial \widetilde{f}}{\partial \widetilde{\alpha}} (\widetilde{X_s}) \Big) \, \mathrm{d}s +  \sigma \int_0^t \frac{\partial \widetilde{f}}{\partial \widetilde{\alpha}} (\widetilde{X_s}) \circ \mathrm{d}W_s \\
&=\int_0^t (\mathcal{A}_0 f) (X_s) \, \mathrm{d}s + \int_0^t (\mathcal{A}_1 f) (X_s) \circ \mathrm{d}W_s.
\end{align*}

Summarizing, Equation \eqref{eq_2D_fiber_lay_down_manifold} may reasonably be seen as a natural formulation of the two-dimensional fiber lay-down model on $\mathbb{R}^2 \times \mathbb{T}$.

\subsection{The equivalent two-dimensional model on \texorpdfstring{$\mathbb{R}^2 \times \mathbb{S}^1$}{}} \label{equivalentSDE_2d}  Since $\mathbb{T}$ and $\mathbb{S}^1$ are diffeomorphic with $\alpha \leftrightarrow v$, $v=\tau(\alpha)$, SDE \eqref{eq_2D_fiber_lay_down_manifold} can equivalently be formulated on the submanifold $\mathbb{R}^2 \times \mathbb{S}^1$ of $\mathbb{R}^4$. To obtain the equivalent SDE on $\mathbb{R}^2 \times \mathbb{S}^1$ we only have to compute the pushforward vector fields $\widetilde{\mathcal{A}_0}$, $\widetilde{\mathcal{A}_1}$ on $\mathbb{R}^2 \times \mathbb{S}^1$ associated to the vector fields $\mathcal{A}_0$ and $\mathcal{A}_1$ from \eqref{Df_vector_fields_A_0_A_1}. But first some notation: Each $x \in \mathbb{R}^4$ is written in the form $x=(\xi,v)  \in \mathbb{R}^2 \times \mathbb{R}^2$ and $v^\bot:=(-v_2,v_1)^T$. Now for $f \in C^\infty(\mathbb{R}^2 \times \mathbb{T})$ let $\widetilde{f} \in C^\infty(\mathbb{R}^2 \times \mathbb{S}^1)$ be given as 
\begin{align*}
\widetilde{f}(\widetilde{p}):=f(p),~p=(\xi,\alpha) \in \mathbb{R}^2 \times \mathbb{T},~\widetilde{p}=(\xi,v)=(\xi,\tau(\alpha)) \in \mathbb{R}^2 \times \mathbb{S}^1.
\end{align*}
So if $\widetilde{\frac{\partial}{\partial \alpha}}$ denotes the pushforward vector field of $\frac{\partial}{\partial \alpha}$ we get 
\begin{align*}
\widetilde{\frac{\partial}{\partial \alpha}} \widetilde{f} (\widetilde{p}) = \frac{\partial}{\partial \alpha} f(p) = \frac{\partial}{\partial \alpha} \widetilde{f}(\xi,\tau(\alpha)) = \tau^\bot(\alpha) \cdot \nabla_v \widetilde{f} \, (\widetilde{p}) = v^\bot \cdot \nabla_v \widetilde{f} \,(\widetilde{p})
\end{align*}
where we have used that any smooth function on the submanifold $\mathbb{R}^2 \times \mathbb{S}^1$ can locally be extended to some smooth function defined locally around $(\xi,v)$ in $\mathbb{R}^4$. Thus the vector field $\frac{\partial}{\partial \alpha}$ on $\mathbb{R}^2 \times \mathbb{T}$ corresponds to the vector field $v^\bot \cdot \nabla_v$ living on $\mathbb{R}^2 \times \mathbb{S}^1$. The latter is again canonically identified with $(0,v^\bot)$, in notation $v^\bot \cdot \nabla_v \equiv \,(0,v^\bot)$, $v \in \mathbb{S}^1$. Moreover, note that for $v \in \mathbb{S}^1$, the orthogonal projection $\Pi_{\mathbb{S}^1} [v] : \mathbb{R}^2 \rightarrow T_v \mathbb{S}^1$, $\Pi_{\mathbb{S}^1} [v] \,(y) = (I-v \otimes v)y$, is equal to $\Pi_{\mathbb{S}^1} [v] \,(y)= y \cdot v^\bot \, v^\bot$ since $T_v \mathbb{S}^1 = \text{span} \{ v^\bot \}$. Recall, $v \otimes v = v v^T$. Altogether, we get
\begin{align*}
&\widetilde{\mathcal{A}_0}= v \cdot \nabla_\xi - (I-v \otimes v) \nabla \Phi(\xi) \cdot \nabla_v   \equiv \, \begin{pmatrix} v \\ -(I-v \otimes v) \nabla \Phi (\xi) \end{pmatrix},\\
&\widetilde{\mathcal{A}_1} = \sigma \, v^\bot \cdot \nabla_v  \equiv \,\sigma \begin{pmatrix} 0 \\  v^\bot \end{pmatrix}
\end{align*}
and the equivalent Stratonovich SDE on $\mathbb{R}^2 \times \mathbb{S}^1$ associated to \eqref{eq_2D_fiber_lay_down_manifold} reads
\begin{align} \label{eq_2D_fiber_lay_down_manifold_equivalent}
\mathrm{d}X_t = \widetilde{\mathcal{A}_0} (X_t) \,\mathrm{dt} + \widetilde{\mathcal{A}_1} (X_t) \circ \mathrm{d}W_t
\end{align} 
where $W=\{W_t\}_{t \geq 0}$ is a standard one-dimensional Brownian motion.

\subsection{The \texorpdfstring{$d$}{}-dimensional model on \texorpdfstring{$\mathbb{R}^d \times \mathbb{S}^{d-1}$}{}} \label{isotropic_nd_model} In the following let $d \in \mathbb{N}$ with $d \geq 2$. Next, we translate the fiber lay-down model \eqref{eq_2D_fiber_lay_down_manifold_equivalent} in the most natural way to $\mathbb{R}^d \times \mathbb{S}^{d-1}$. The resulting equation is then called the \textit{$d$-dimensional or geometric fiber lay-down model}. But first note that the stochastic part of \eqref{eq_2D_fiber_lay_down_manifold_equivalent}, i.e., the term $\widetilde{\mathcal{A}_1} \circ \mathrm{d}W_t$, models Brownian motion (with diffusion constant $\sigma$) in the components representing $\mathbb{S}^1$. This holds since the vector field $\mathcal{V}:=v^\bot\, \equiv \, v^\bot \cdot \nabla_{\mathbb{R}^2} $, $v \in \mathbb{S}^1$, satisfies $ \mathcal{V}^2 = \Delta_{\mathbb{S}^1}$. Now there is another possibility of writing $\Delta_{\mathbb{S}^1}$ and generating a Brownian motion on $\mathbb{S}^1$. Therefore, set 
\begin{align*}
\mathcal{V}_j:= (I-v \otimes v)e_j \, \equiv \, (I-v \otimes v)e_j \cdot \nabla_{\mathbb{R}^2},~v \in \mathbb{S}^1,~j=1,2,
\end{align*}
and $e_j$ being the $j$-th unit vector. Then one easily verifies $ \sum_{j=1}^2 \mathcal{V}_j^2 =  \Delta_{\mathbb{S}^1}$. 
\medskip

Having the latter identity in mind, we introduce the $d$-dimensional model, $d \geq 2$, on the manifold $\mathbb{R}^d \times \mathbb{S}^{d-1}$ as
\begin{align} \label{eq_nD_fiber_lay_down_manifold}
\mathrm{d}X_t = \mathcal{N}_{0} (X_t) \,\mathrm{dt} + \sum_{j=1}^d \mathcal{N}_{j}  (X_t) \circ \mathrm{d}W^{(j)}_t
\end{align}
where $W=\{ W_t\}_{t \geq 0}$ is a standard $d$-dimensional Brownian motion and the vector fields $\mathcal{N}_{j}$ are defined  on $\mathbb{R}^d \times \mathbb{S}^{d-1}$ via
\begin{align} \label{Def_vector_fields_N_j}
\mathcal{N}_{0}= v \cdot \nabla_\xi - (I-v \otimes v) \nabla \Phi(\xi) \cdot \nabla_v \, \equiv \, \begin{pmatrix} v \\ -(I-v \otimes v) \nabla \Phi (\xi) \end{pmatrix},\\
\mathcal{N}_{j} = \sigma \, (I-v \otimes v)e_j \cdot \nabla_v \, \equiv \,\sigma \begin{pmatrix} 0 \\  (I-v \otimes v)e_j \end{pmatrix},~j=1,\ldots,d. \nonumber
\end{align}
Here $e_j$ is the $j$-th unit vector in $\mathbb{R}^d$ and any point from $\mathbb{R}^d \times \mathbb{S}^{d-1}$ is written in the form $(\xi,v)$. $\Phi \in C^\infty(\mathbb{R}^d)$ is only depending on $\xi$. In case $d=2$, SDE \eqref{eq_nD_fiber_lay_down_manifold} then slightly differs from the original equation derived in \eqref{eq_2D_fiber_lay_down_manifold_equivalent}. Nevertheless, their generators are the same. In particular, for $d=2$, any solution to \eqref{eq_2D_fiber_lay_down_manifold_equivalent} coincides weakly with any solution to \eqref{eq_nD_fiber_lay_down_manifold}. 
\medskip

Altogether, Equation \eqref{eq_nD_fiber_lay_down_manifold} may reasonably be seen as the natural $d$-dimensional version of the fiber lay-down model. The physical relevant scenario are considering the cases $d=2$ and $d=3$. This abstract SDE can now be embedded into $\mathbb{R}^{2d}$ by extending the vector fields $\mathcal{N}_j$ arbitrary to smooth vector fields on $\mathbb{R}^{2d}$. The extensions are still denoted by $\mathcal{N}_j$. The solution to \eqref{eq_nD_fiber_lay_down_manifold} is then obtained by solving the extended Stratonovich SDE in $\mathbb{R}^{2d}$, see \cite[Prop.~1.2.8]{Hsu02}. By choosing the trivial extensions, we may consider the following Stratonovich SDE in $\mathbb{R}^{2d}$ of the form
\begin{align} \label{eq_nD_fiber_lay_down_manifold_embedded} 
&\mathrm{d} \xi_t = v_t \,\mathrm{dt} \\
&\mathrm{d}v_t = -(I-v_t \otimes v_t) \,\nabla \Phi (\xi_t) \, \mathrm{dt} + \sigma \, (I-v_t \otimes v_t) \circ \mathrm{d}W_t. \nonumber
\end{align}
Its solution stays on $\mathbb{R}^d \times \mathbb{S}^{d-1}$ provided that the initial value lies on the manifold and gives a solution to \eqref{eq_nD_fiber_lay_down_manifold}, see \cite[Prop.~1.2.8]{Hsu02}. Here we define
\begin{align*}
(I-v_t \otimes v_t) \circ \mathrm{d}W_t:=\sum_{j=1}^d (I-v_t \otimes v_t)e_j \circ \mathrm{d}W_t^{(j)}.
\end{align*}

\subsection{The associated Kolmogorov operator} 
Let us calculate the Kolmogorov operator $L:C^\infty(\mathbb{R}^d \times \mathbb{S}^{d-1}) \rightarrow C^\infty(\mathbb{R}^d \times \mathbb{S}^{d-1})$ associated to \eqref{eq_nD_fiber_lay_down_manifold}. As in the two-dimensional case it holds $\sum_{j=1}^d \mathcal{N}_{j}^2 = \sigma^2 \Delta_{\mathbb{S}^{d-1}}$, see \cite[Theo.~3.1.4]{Hsu02}. Thus the associated  generator $L= \mathcal{N}_{0} + \frac{1}{2} \sum_{j=1}^d  \mathcal{N}_{j}^2$ is given by
\begin{align} \label{eq_generator_nd_model}
L&= v \cdot \nabla_\xi - (I-v \otimes v) \nabla \Phi(\xi) \cdot \nabla_v + \frac{\sigma^2}{2}  \, \Delta_{\mathbb{S}^{d-1}} \\
&= v \cdot \nabla_\xi - \text{grad}_{\mathbb{S}^{d-1}} V + \frac{\sigma^2}{2}  \,\Delta_{\mathbb{S}^{d-1}} \nonumber
\end{align}
with $V: \mathbb{R}^d \times \mathbb{S}^{d-1} \rightarrow \mathbb{R}$, $V(\xi,v):= \nabla \Phi(\xi) \cdot v$. Let us already remark that a stationary solution to the associated Fokker-Planck equation is explicitly known and is given up to normalization by $e^{-(d-1)\Phi}$, see Section \ref{section_mixing_properties}. Convergence to equilibrium of the process $X$, solving the $d$-dimensional fiber lay-down SDE, towards its stationary state is studied in Sections \ref{section_mixing_properties} and \ref{section_Hypocoercivity}.

\begin{Rm} \label{Rm_operator_replacement}
The generator $L:C^\infty(\mathbb{R}^2 \times \mathbb{T}) \rightarrow C^\infty(\mathbb{R}^2 \times \mathbb{T})$ of \eqref{eq_2D_fiber_lay_down_manifold} reads
\begin{align*}
L&= \tau \cdot \nabla_\xi  - \nabla \Phi (\xi)  \cdot \tau^{\bot} \, \frac{\partial}{\partial \alpha} + \frac{\sigma^2}{2}\, \frac{\partial^2}{\partial \alpha^2} =\tau \cdot \nabla_\xi - \text{grad}_{\mathbb{T}}\,V + \frac{\sigma^2}{2} \,\Delta_{\mathbb{T}}.
\end{align*}
Now $V:\mathbb{R}^2 \times \mathbb{T} \rightarrow \mathbb{R}$ is defined as $V(\xi,\alpha) = \nabla \Phi (\xi) \cdot \tau$. Thus we see that \eqref{eq_generator_nd_model} is the natural generalization of the two-dimensional fiber lay-down generator from $\mathbb{R}^2 \times \mathbb{T}$. So, in order to derive our $d$-dimensional equations, we could alternatively translate the generators to higher dimensions and search afterwards for a corresponding modeling SDE. Following this approach we end up again with the SDE \eqref{eq_nD_fiber_lay_down_manifold}.
\end{Rm}

\begin{figure}[tbp] 
\subfigure{\includegraphics[scale=0.47]{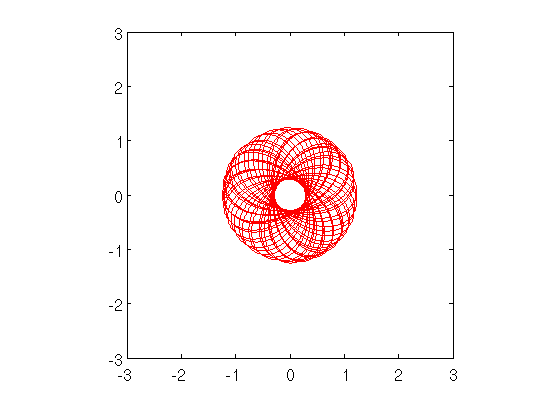}}
\subfigure{\includegraphics[scale=0.47]{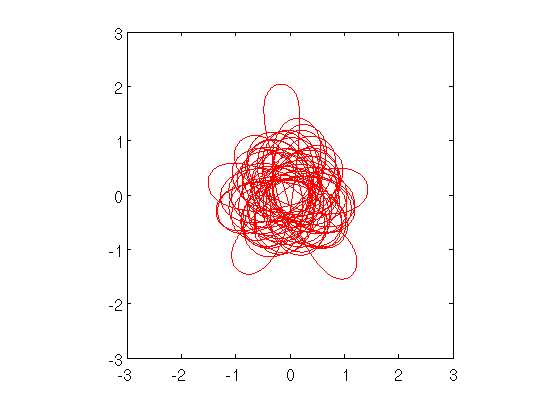}}
\subfigure{\includegraphics[scale=0.47]{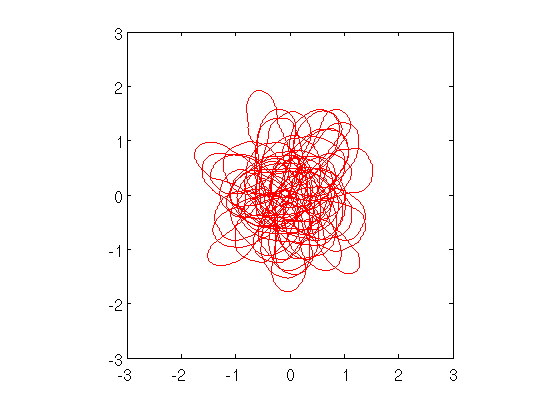}}
\subfigure{\includegraphics[scale=0.47]{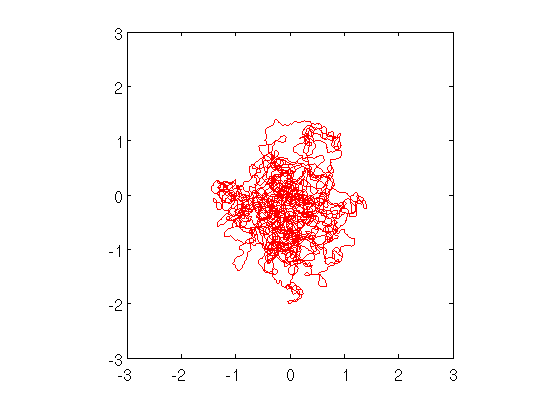}}
\caption{Two-dimensional case} \label{figure_2d}
\end{figure}

\begin{figure}[tbp] 
\subfigure{\includegraphics[scale=0.29]{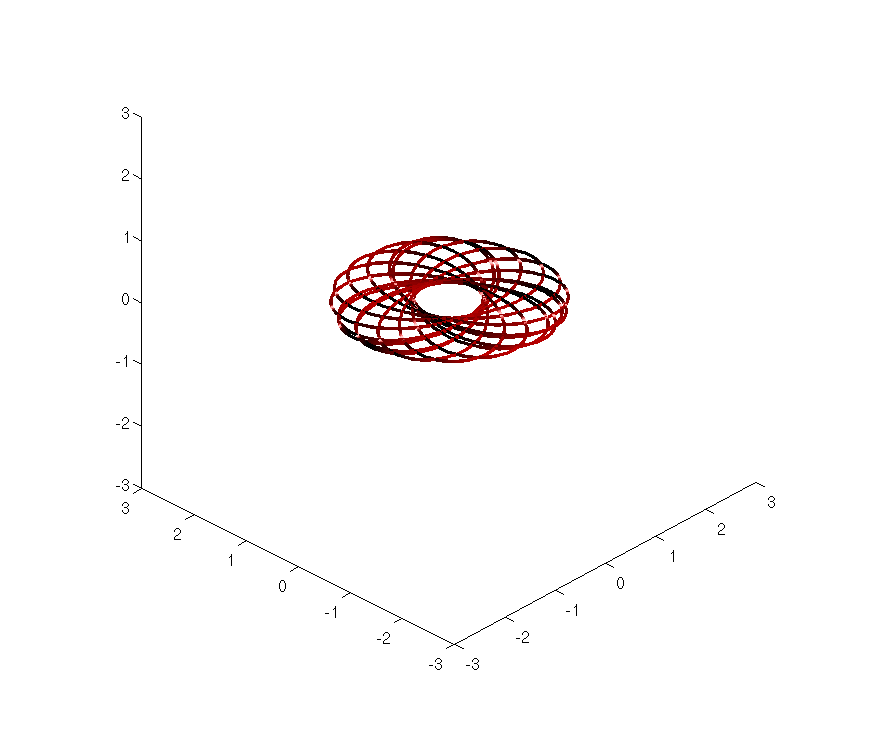}}
\subfigure{\includegraphics[scale=0.28]{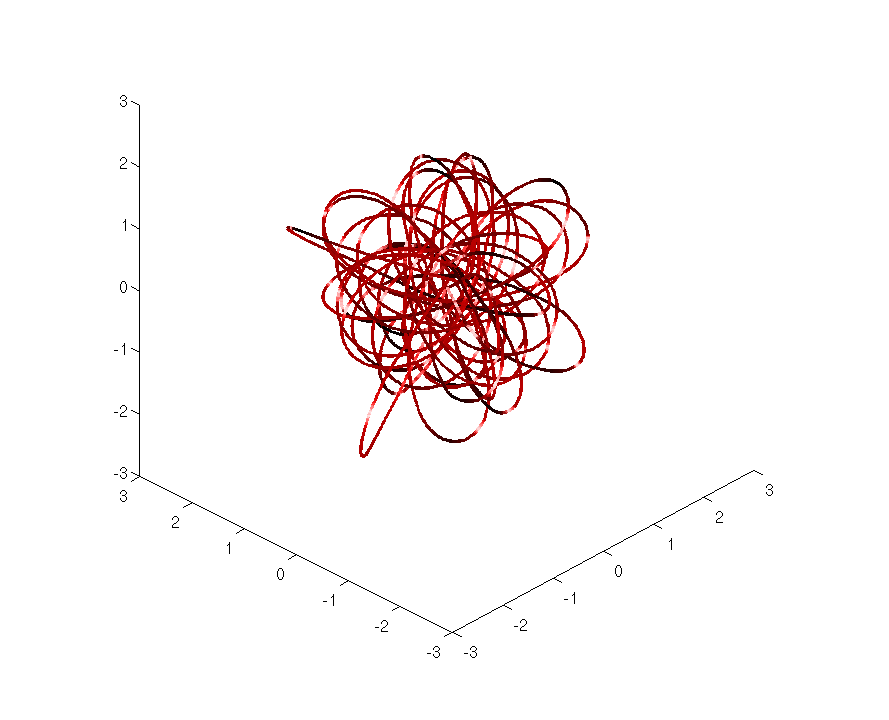}}
\subfigure{\includegraphics[scale=0.28]{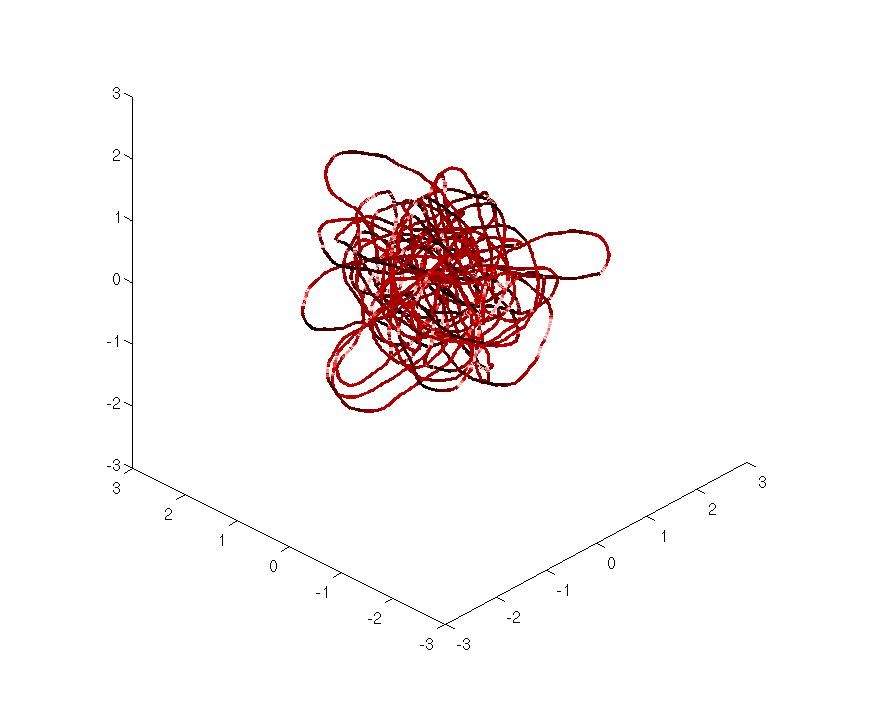}}
\subfigure{\includegraphics[scale=0.29]{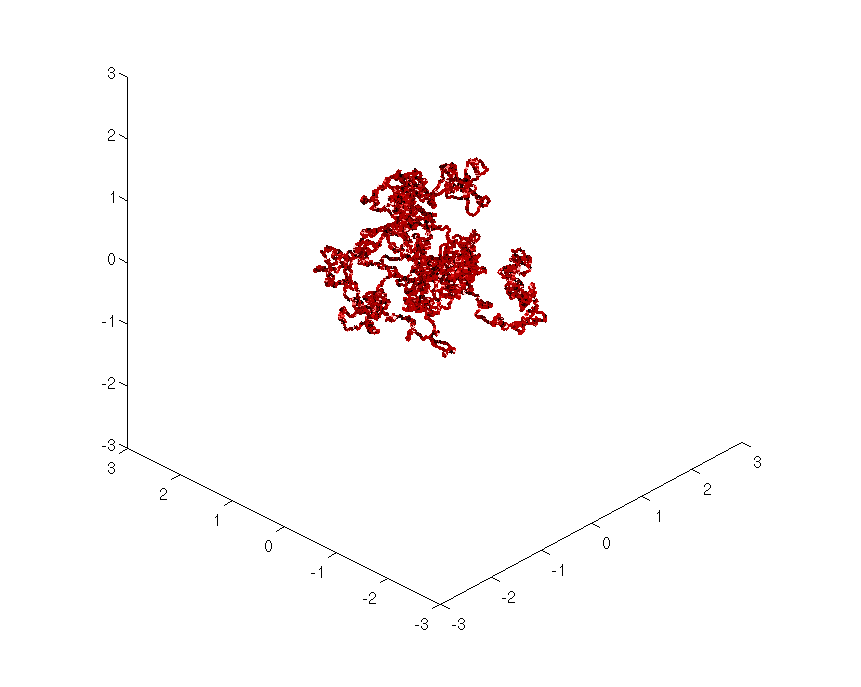}}
\caption{Three-dimensional case} \label{figure_3d}
\end{figure}

\subsection{Numerical simulations}

For simulating our previously defined $d$-dimensional fiber lay-down SDE \eqref{eq_nD_fiber_lay_down_manifold}, we may of course simulate SDE \eqref{eq_nD_fiber_lay_down_manifold_embedded} directly in the underlying euclidean space $\mathbb{R}^{2d}$. Nevertheless, this requires a consistent numerical algorithm staying on the manifold $\mathbb{R}^d \times \mathbb{S}^{d-1}$. 
\medskip

Instead, we take another approach and rewrite our fiber lay-down SDE in \textit{local coordinate form}. Therefore, let $d \in \mathbb{N}$, $d \geq 2$, and recall the following spherical coordinate system $\tau_{d-1}(\theta)=\tau_{d-1}(\theta_1,\ldots,\theta_{d-1})$ with $\theta_1 \in (0,2\pi)$ and $\theta_j \in (0,\pi)$ for $j=2,\ldots,d-1$, see the Appendix. Furthermore, note that $\tau_{d-1}$ is $2\pi$-periodic in $\theta_1$. Consider the following SDE
\begin{align} \label{SDE_local_parameter_form}
&\mathrm{d}\xi = \tau_{d-1} (\theta) \,\mathrm{dt} \\
&\mathrm{d}\theta_{j} = \left(  - \mathcal{G}_j(\theta) \, \nabla \Phi (\xi) \cdot n_j (\theta) +  \frac{\sigma^2}{2} \, \mathcal{G}_j^2(\theta) \, (j-1) \cot(\theta_j) \right)\, \mathrm{dt} + \sigma \,\mathcal{G}_j(\theta) \,\mathrm{d}W^{(j)}_t \nonumber
\end{align}
with $j=1,\ldots,d-1$. For abuse of notation, the time index $t$ is omitted. Here $\mathcal{G}_j$ (with $\mathcal{G}_{d-1}:=1$) and $n_j$ are given as
\begin{align*}
\mathcal{G}_j (\theta) = \prod_{i=j+1}^{d-1} \frac{1}{\sin(\theta_i)},~n_j (\theta)=  \frac{\partial_{\theta_j} \tau_{d-1}(\theta)}{| \partial_{\theta_j} \tau_{d-1}(\theta) |},~j=1,\ldots,d-1.
\end{align*}
and $W$ denotes a standard $(d-1)$-dimensional Brownian motion. Note that \eqref{SDE_local_parameter_form} writes as Stratonovich SDE in the same form. Let us justify this definition in the upcoming remark.

\begin{Rm} SDE \eqref{SDE_local_parameter_form} should be understood analogously as in Section \ref{2d_model_torus} as some manifold-valued Stratonovich SDE with state space $\mathbb{R}^d \times \mathbb{T} \times (0,\pi)^{d-2}$, or state space $\mathbb{R}^2 \times \mathbb{T}$ in case $d=2$, respectively. By using Formulas \eqref{formula_gradient_local} and \eqref{formula_Beltrami_local} from the Appendix, observe that the generator corresponding to  \eqref{SDE_local_parameter_form} coincides with the fiber lay-down generator $L$ computed on $\mathbb{R}^d \times \mathbb{T} \times (0,\pi)^{d-2}$. Now assume that there exists a solution $Y$ to \eqref{SDE_local_parameter_form} starting from $(\xi,\theta_1,\ldots,\theta_{d-1})$ and having infinite lifetime. Then it is easy to see that $X$, defined by $X_t:=\left(\xi(t),\tau_{d-1}(\theta_{1}(t),\ldots,\theta_{d-1}(t))\right)$, is a $L$-diffusion. Hence $X$ coincides weakly with any solution of the $d$-dimensional fiber lay-down SDE \eqref{eq_nD_fiber_lay_down_manifold} starting from $(\xi,\tau_{d-1}(\theta_1,\ldots,\theta_{d-1}))$. 
\end{Rm}

Summarizing, the simulation of SDE \eqref{SDE_local_parameter_form} gives us a (local) $L$-diffusion, $L$ being our $d$-dimensional fiber lay down generator. Thus it is reasonable to call \eqref{SDE_local_parameter_form} simply the \textit{$d$-dimensional fiber lay down SDE in local coordinate form}.  Note that in case $d=2$, \eqref{SDE_local_parameter_form} reduces to \eqref{eq_2D_fiber_lay_down} (or \eqref{eq_2D_fiber_lay_down_manifold}  respectively) and in case $d=3$ we obtain back the three-dimensional fiber lay down model derived in \cite{KMW12}. In Figure \ref{figure_2d} and Figure \ref{figure_3d}, let us illustrate and compare the $\xi$-trajectories in case $d=2$, $d=3$, for different values of $\sigma$ where $\sigma=0,~0.1,~0.5,~4.0$.  $\Phi$ is chosen as $\Phi=|\xi|^2$.

\section{Global solutions to the stochastic equations} \label{solutions_SDEs}

Before studying the long time behaviour of solutions to our $d$-dimensional fiber lay-down equations, we shall discuss basic existence statements of the underlying stochastic equations itself.

\subsection{The two-dimensional model} 

First we analyze the basic two-dimensional model, i.e., the It\^{o} SDE \eqref{eq_2D_fiber_lay_down} in $\mathbb{R}^3$. $\Phi(\xi)=a\,\xi_1^2 + b\,\xi_2^2$, $a,b \in \mathbb{R}$, $a,b \geq 0$, treats the physical relevant situation, see \cite{KMW09}. It is easy to verify that in this case the drift vector of the underlying SDE is not globally Lipschitz continuous and one may ask under which conditions the stochastic equation admits a global solution. Therefore, we rewrite the equation in a bit more general form as
\begin{align} \label{2d_model_general_form}
& \mathrm{d}\xi_t = \tau(\alpha_t)  \, \mathrm{dt} \\
& \mathrm{d}\alpha_t =  \Psi(\xi_t) \cdot \tau^{\bot}(\alpha_t) \, \mathrm{dt} + \sigma \, \mathrm{d}W_t. \nonumber
\end{align}

The following proposition shows that even under the assumption of $\Psi$ being locally Lipschitz, explosion is not possible and SDE \eqref{2d_model_general_form} admits a classical globally defined strong solution. 

\begin{Pp} \label{Pp_2d_model_general_form}
Let $\Psi: \mathbb{R}^2 \rightarrow \mathbb{R}^2$ be locally Lipschitz continuous. Given a filtered probability space $(\Omega, \mathcal{F}, \mathbb{P}, \{\mathcal{F}_t\}_{t \geq 0})$ together with an one-dimensional $\{ \mathcal{F}_t\}_{t \geq 0}$-Brownian motion $W=\{ W_t\}_{t \geq 0}$. Then for each $\mathcal{F}_0$-measurable $x_0: \Omega \rightarrow \mathbb{R}^3$ there exists a unique $\{ \mathcal{F}_t\}_{t \geq 0}$-adapted continuous $\mathbb{R}^3$-valued process $X=\{X_t\}_{t \geq 0}$ which satisfies $X_0=x_0$ $\mathbb{P}$-a.s.~and solves \eqref{2d_model_general_form} in integral form.
\end{Pp}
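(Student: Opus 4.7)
The plan is the standard two-step scheme for SDEs with locally Lipschitz coefficients: first establish local existence and uniqueness of a strong solution up to an explosion time $\tau_e$, then rule out explosion via a pathwise a priori bound that exploits the structural fact $|\tau(\alpha)|=|\tau^\bot(\alpha)|=1$.

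First I would recast \eqref{2d_model_general_form} as an It\^{o} SDE $\mathrm{d}X_t=b(X_t)\,\mathrm{dt}+\sigma_0\,\mathrm{d}W_t$ on $\mathbb{R}^3$ with $X=(\xi,\alpha)$, constant diffusion vector $\sigma_0=(0,0,\sigma)^T$, and drift
\[
b(\xi,\alpha)=\bigl(\tau(\alpha),\; \Psi(\xi)\cdot\tau^{\bot}(\alpha)\bigr)^T.
\]
Since $\tau$ and $\tau^{\bot}$ are smooth and uniformly bounded and $\Psi$ is locally Lipschitz by assumption, $b$ is locally Lipschitz on $\mathbb{R}^3$ and the diffusion coefficient is constant. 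The classical local strong existence and uniqueness theorem (see e.g.~\cite[Ch.~IV, Thm.~3.1]{IW89}) then provides, for any $\mathcal{F}_0$-measurable $x_0$, a unique $\{\mathcal{F}_t\}_{t\geq 0}$-adapted continuous process $X$ defined on a stochastic interval $[0,\tau_e)$ and satisfying \eqref{2d_model_general_form} in integral form with $X_0=x_0$ $\mathbb{P}$-a.s., where $\tau_e=\lim_{n\to\infty}\tau_n$ and $\tau_n:=\inf\{t\geq 0:|X_t|\geq n\}$.

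The core of the proof is then to show $\tau_e=\infty$ $\mathbb{P}$-a.s. Since $|\tau(\alpha_t)|\equiv 1$, integrating the $\xi$-equation gives the deterministic-type bound $|\xi_t|\leq |\xi_0|+t$ for every $t<\tau_e$. Fixing $T>0$ and working on the event $\{|\xi_0|\leq K\}$, the $\xi$-trajectory therefore remains in the closed ball of radius $K+T$ on $[0,T\wedge\tau_e)$. Local boundedness of $\Psi$, which follows from its local Lipschitz continuity, yields $M_{K,T}:=\sup\{|\Psi(\xi)|:|\xi|\leq K+T\}<\infty$, and using $|\tau^{\bot}|=1$ in the $\alpha$-equation gives
\[
|\alpha_t|\;\leq\;|\alpha_0|+\int_0^t|\Psi(\xi_s)|\,\mathrm{d}s+\sigma|W_t|\;\leq\;|\alpha_0|+M_{K,T}\,T+\sigma\sup_{s\leq T}|W_s|
\]
for all $t\leq T\wedge\tau_e$. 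The right-hand side is $\mathbb{P}$-a.s.\ finite and independent of $n$, so on $\{|\xi_0|\vee|\alpha_0|\leq K\}$ the stopping times $\tau_n$ exceed $T$ for all sufficiently large $n$. Consequently $\mathbb{P}(\tau_e\leq T,\,|\xi_0|\vee|\alpha_0|\leq K)=0$, and letting first $K\to\infty$ and then $T\to\infty$ yields $\tau_e=\infty$ $\mathbb{P}$-a.s. Uniqueness on $[0,\infty)$ is inherited from uniqueness on each $[0,\tau_n]$ via the standard patching argument.

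I do not expect any genuine obstacle: the only potentially delicate point is that the drift $\Psi$ may grow superlinearly (e.g.~$\Psi=-\nabla\Phi$ with quadratic $\Phi$), which would normally threaten non-explosion; here this is precisely defused by the geometric feature that the fiber is laid down with unit speed, producing the \emph{a priori} linear bound $|\xi_t|\leq|\xi_0|+t$ without invoking any Lyapunov function. The mild technical nuisance of an unbounded initial condition $x_0$ is handled by the standard localisation on $\{|x_0|\leq K\}$ followed by $K\to\infty$.
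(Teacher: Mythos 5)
Your argument is correct, and the two key ingredients — the structural a priori bound $|\xi_t|\le|\xi_0|+t$ coming from $|\tau|\equiv 1$, and the consequent local boundedness of $\Psi$ along the $\xi$-trajectory — are exactly the ones the paper exploits. Where you differ is in the overall strategy: you invoke the abstract local existence and uniqueness theorem for SDEs with locally Lipschitz coefficients to get a maximal solution on $[0,\tau_e)$, and then prove $\tau_e=\infty$ by a localisation/non-explosion argument. The paper instead never cites local SDE existence theory at all: since the noise is additive, the equation reduces pathwise, for each fixed $\omega$, to an ODE with continuous random forcing $AW_t(\omega)$, and the authors run a direct Picard--Lindel\"{o}f iteration $X^{(n+1)}_t=x_0+\int_0^t b(X^{(n)}_s)\,\mathrm{d}s+AW_t$. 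The same bound $|\xi^{(n)}_t(\omega)|\le|x_0(\omega)|+T$ holds uniformly over the iterates, so $b$ may be replaced by its restriction to a fixed compact set in the $\xi$-variable (where it \emph{is} Lipschitz), and the iterates converge uniformly on $[0,T]$ by the classical factorial estimate. Your route is shorter if one is willing to take the local existence theorem and the definition of the explosion time as given; the paper's route is more self-contained and elementary, constructing the global strong solution directly $\omega$ by $\omega$ without any appeal to stochastic existence theory, at the price of writing out the Picard scheme. One minor point of hygiene in your write-up: to make the non-explosion step airtight you should note explicitly that both components of $X$ are bounded on $[0,T\wedge\tau_e)$ by an a.s.\ finite random variable that is independent of $n$, and that $|X_{\tau_n}|=n$ would contradict this whenever $\tau_e\le T$; you gesture at this but it is worth stating, since the explosion criterion concerns the full state $X=(\xi,\alpha)$, not $\xi$ alone.
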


\begin{proof}
The classical Picard-Lindel"{o}f iteration scheme can be applied pathwise for each $ \omega \in \Omega$. This is due to the special structure of our equation. We follow the strategy from \cite[Ch.~5;~Ex.~2.19]{KS91}.  So let $t \geq 0$, $n \in \mathbb{N}_0=\mathbb{N} \cup \{0\}$, and define
\begin{align*}
X^{(0)}_t := x_0,~ X^{(n+1)}_t := x_0 + \int_{0}^{t}{b(X^{(n)}_s)}\,\mathrm{d}s + A\, W_t
\end{align*}
where $b(\xi,\alpha):=(\tau(\alpha),\Psi(\xi) \cdot \tau^{\bot}(\alpha) )^T,~A:= \left( 0,0,\sigma \right)^T$.
Then $X^{(n)}=\{ X^{(n)}_t\}_{t \geq 0}$, $n \in \mathbb{N}_0$, is easily seen to be continuous and $\{ \mathcal{F}_t \}_{t \geq 0}$-adapted.\\
Now fix $T > 0$ and $\omega \in \Omega$. Observe that the first two components of $X^{(n)}_t(\omega)$ are bounded by $N:=\left|x_0(\omega)\right| + T$ for each $n \in \mathbb{N}_0$, $t \in [0,T]$. Now $N$ only depends on $\omega$ and $T$. Thus we may restrict $b$ to $[-N,N]^{\,2} \times \mathbb{R}$ in the definition of $X^{(n)}_t(\omega)$ for all $n \in \mathbb{N}_0$, $t \in [0,T]$. Furthermore, observe that $b$ is Lipschitz continuous on each $K \times \mathbb{R}$, $K \subseteq \mathbb{R}^2$ compact. In particular, there exists some constant $L$, depending on $\omega$ and $T$, such that $\left|b(x) - b(y)\right| \leq L  \left|x-y\right|$ for all $x,y \in [-N,N]^{\,2} \times \mathbb{R}$. So altogether
\begin{align} \label{lipschitz_continuity_of_b^n}
\left|b(X^{(n)}_t(\omega)) - b(X^{(n-1)}_t(\omega))\right| \leq L \, \left|X^{(n)}_t(\omega)-X^{(n-1)}_t(\omega)\right| 
\end{align} 
holds for all $n \in \mathbb{N},~t \in [0,T]$. Now we may continue as in the classical case. Indeed, for those $n$ and $t$ we define 
\begin{align*}
D^{(n)}_t(\omega) = \max_{0 \leq s \leq t} \left|X^{(n)}_s(\omega) - X^{(n-1)}_s(\omega) \right|.
\end{align*}
Then \eqref{lipschitz_continuity_of_b^n} yields $D^{(n)}_t(\omega) \leq L \int_{0}^{t}{D^{(n-1)}_{s^\prime}(\omega) \,\mathrm{d}s^\prime}$ for $n \geq 2$ and $t \in [0,T]$. Moreover, $D^{(1)}_t(\omega)$ is bounded for $t \in [0,T]$ by $K:=T \, | b(x_0(\omega))| + \sigma \, \max_{0 \leq s \leq T} \left|W_s(\omega)\right|$ which also depends only on $\omega$ and $T$. Thus inductively we get
\begin{align*}
D^{(n)}_t(\omega) \leq  K \, \frac{  (L \,t)^{n-1}}{(n-1)!},~n \in \mathbb{N},~t \in [0,T].
\end{align*}
Now let $n > m$, $t \in [0,T]$. The latter estimate implies
\begin{align*}
\max_{0 \leq s \leq t} \left| X^{(n)}_s(\omega) - X^{(m)}_s(\omega) \right| &\leq \sum_{j=m}^{n-1}{\max_{0 \leq s \leq t} \left| X^{(j+1)}_s(\omega) - X^{(j)}_s(\omega) \right|} \leq K \sum_{j=m}^{\infty}{\frac{(L \, t)^{j}}{j!}}.
\end{align*}
Hence $X^{(n)}(\omega)$, $n \in \mathbb{N}$, converges uniformly on $[0,T]$ to some continuous function $X_t(\omega)$ where $0 \leq t \leq T$. Further note that $X_t(\omega)$ is contained in $[-N,N]^{\,2} \times \mathbb{R}$ for all $t \in [0,T]$, since $X^{(n)}_t(\omega)$ satisfies this property independent of $n \in \mathbb{N}$. Hence for $t \in [0,T]$ we conclude
\begin{align*}
\left| \int_{0}^{t}{b(X^{(n)}_s(\omega))\,\mathrm{d}s} - \int_{0}^{t}{b(X_s(\omega))\,\mathrm{d}s} \right|  \leq t\, L\, \max_{0 \leq s \leq t}\left| X^{(n)}_s(\omega) - X_s(\omega) \right|  \stackrel{n \rightarrow \infty}{\longrightarrow} 0.
\end{align*}
This shows that $X_t(\omega)$, $0 \leq t \leq T$, solves \eqref{2d_model_general_form} in integral form. Now the previous construction holds for all $T \geq 0$ and each $\omega \in \Omega$, hence the map $t \mapsto X_t(\omega)$,  $t \geq 0$, is well-defined and continuous. Clearly, $X_t$ is $\mathcal{F}_t$-measurable, $t \geq 0$. Concerning the uniqueness statement, see e.g.~\cite[Ch.~5,~Theo.~2.5]{KS91}.
\end{proof}

Now we are able to solve easily the two-dimensional fiber lay-down SDE \eqref{eq_2D_fiber_lay_down_manifold} with state space $\mathbb{R}^2 \times \mathbb{T}$ without using any abstract arguments. For simplicity we shall assume $\Phi \in C^\infty(\mathbb{R}^2)$ to stay consistent with the (smooth) manifold language. Concerning all the following uniqueness statements, we refer to \cite[Prop.~1.2.9]{Hsu02}.

\begin{Cor}
Let $\Phi \in C^\infty(\mathbb{R}^2)$. Then there exists a unique solution $X=\{X_t\}_{t \geq 0}$ with infinite lifetime of the two-dimensional fiber lay-down $\text{SDE}(\mathcal{A}_0,\mathcal{A}_1;W,x_0)$, see Equation \eqref{eq_2D_fiber_lay_down_manifold}, with state space $\mathbb{R}^2 \times \mathbb{T}$.
\end{Cor}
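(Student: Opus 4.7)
The strategy is to reduce the manifold-valued SDE on $\mathbb{R}^2 \times \mathbb{T}$ to its lifted counterpart on the covering space $\mathbb{R}^3$, where Proposition \ref{Pp_2d_model_general_form} applies directly. The computation already carried out in Section \ref{2d_model_torus}, namely that $X_t := \mathrm{P}(\widetilde{X}_t)$ solves \eqref{eq_2D_fiber_lay_down_manifold} whenever $\widetilde{X}$ solves \eqref{eq_2D_fiber_lay_down}, provides the bridge between the two problems. Since $\Phi \in C^\infty(\mathbb{R}^2)$ implies that $\Psi := -\nabla \Phi \in C^\infty(\mathbb{R}^2,\mathbb{R}^2)$ is in particular locally Lipschitz, Equation \eqref{eq_2D_fiber_lay_down} is a special instance of \eqref{2d_model_general_form}, so the hypothesis of Proposition \ref{Pp_2d_model_general_form} is satisfied.

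First I would construct a measurable lift of the initial data. Given $\mathcal{F}_0$-measurable $x_0 = (\xi_0,\alpha_0): \Omega \rightarrow \mathbb{R}^2 \times \mathbb{T}$, fix the Borel section $s:\mathbb{T} \rightarrow \mathbb{R}$ that sends a class $[\beta] \in \mathbb{T}$ to its unique representative in $[0,2\pi)$. Then $\widetilde{x_0}:= (\xi_0, s(\alpha_0)): \Omega \rightarrow \mathbb{R}^3$ is $\mathcal{F}_0$-measurable and satisfies $\mathrm{P}(\widetilde{x_0}) = x_0$ $\mathbb{P}$-a.s.

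Next, I would apply Proposition \ref{Pp_2d_model_general_form} with $\Psi = -\nabla \Phi$ and initial condition $\widetilde{x_0}$ to produce a unique $\{\mathcal{F}_t\}_{t\geq 0}$-adapted continuous $\mathbb{R}^3$-valued process $\widetilde{X}=\{\widetilde{X}_t\}_{t\geq 0}$ solving \eqref{eq_2D_fiber_lay_down} with $\widetilde{X}_0 = \widetilde{x_0}$. In particular, $\widetilde{X}$ is globally defined (has infinite lifetime). I then define $X_t := \mathrm{P}(\widetilde{X}_t)$, obtaining an $\{\mathcal{F}_t\}_{t\geq 0}$-adapted continuous process on $\mathbb{R}^2 \times \mathbb{T}$. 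The explicit Stratonovich transformation already performed in Section \ref{2d_model_torus} shows verbatim that, for every $f \in C^\infty_c(\mathbb{R}^2 \times \mathbb{T})$, $f(X_t) - f(X_0) = \int_0^t (\mathcal{A}_0 f)(X_s)\,\mathrm{d}s + \int_0^t (\mathcal{A}_1 f)(X_s)\circ \mathrm{d}W_s$, so $X$ solves $\text{SDE}(\mathcal{A}_0,\mathcal{A}_1;W,x_0)$ in the sense of Section \ref{Setup_and_Notations}. Since $\widetilde{X}$ does not explode and $\mathrm{P}$ is defined on all of $\mathbb{R}^3$, the process $X$ never reaches the cemetery point $\Delta$, so it has infinite lifetime.

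For uniqueness, I would simply invoke \cite[Prop.~1.2.9]{Hsu02} referenced directly in the preamble of the corollary: any two solutions of the Stratonovich SDE on $\mathbb{R}^2 \times \mathbb{T}$ with the same initial condition coincide almost surely. I do not anticipate any genuine obstacle here: the only subtlety is the measurable choice of lift for $x_0$, which is resolved by the elementary Borel section above; everything else is a direct application of Proposition \ref{Pp_2d_model_general_form} combined with the pushforward computation already carried out earlier.
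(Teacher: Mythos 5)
Your proposal is correct and follows essentially the same route as the paper: lift the initial condition measurably to $\mathbb{R}^3$ via the Borel isomorphism $[0,2\pi)\cong\mathbb{T}$, solve the lifted SDE by Proposition~\ref{Pp_2d_model_general_form} with $\Psi=-\nabla\Phi$, project by $\mathrm{P}$, and invoke the Stratonovich transformation computation from Section~\ref{2d_model_torus} together with \cite[Prop.~1.2.9]{Hsu02} for uniqueness. The only cosmetic difference is that you spell out the Borel section explicitly while the paper cites \cite[Cor.~I.3.3]{Par67}.
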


\begin{proof}
Consider the canonical projection $\mathrm{P}: \mathbb{R}^3 \rightarrow \mathbb{R}^2 \times \mathbb{T}$. Now we may choose some $\widetilde{x}_0: \Omega \rightarrow \mathbb{R}^3$ being $\mathcal{F}_0$-measurable that satisfies $x_0 = \mathrm{P} \circ \widetilde{x}_0$. Such an $\widetilde{x}_0$ exists since the map $\varphi:[0,2 \pi) \rightarrow \mathbb{T}, x \mapsto [x]$, is a Borel isomorphism, see \cite[Cor.~I.3.3]{Par67}. Proposition \ref{Pp_2d_model_general_form} (with $\Psi= -\nabla \Phi$) is applicable and yields the existence of a strong solution $\widetilde{X}$ to SDE \eqref{2d_model_general_form} with initial condition $\widetilde{x}_0$ relative to $(\Omega, \mathcal{F}, \mathbb{P}, \{\mathcal{F}_t\}_{t \geq 0}, W)$. The process $X$, defined by $X_t= \mathrm{P} \circ \widetilde{X}_t$, $t \geq 0$, satisfies $X_0=x_0$ $\mathbb{P}$-a.s.~and has infinite lifetime. The fact that $X$ solves $\text{SDE}(\mathcal{A}_0,\mathcal{A}_1;W,x_0)$ has already been verified in Section \ref{2d_model_torus}.
\end{proof}

\subsection{The \texorpdfstring{$d$}{}-dimensional fiber lay-down model} The same statement as before remains true for our general $d$-dimensional equation.

\begin{Pp} \label{Pp_solution_d_dimensional_equation}
Let $\Phi \in C^\infty(\mathbb{R}^d)$, $d \geq 2$. Then there exists a unique solution $X$ with infinite lifetime to the $d$-dimensional fiber lay-down $\text{SDE}(\mathcal{N}_{0},\ldots,\mathcal{N}_{n};W,x_0)$, see Equation \eqref{eq_nD_fiber_lay_down_manifold}, with state space $\mathbb{R}^d \times \mathbb{S}^{d-1}$.
\end{Pp}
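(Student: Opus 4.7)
The plan is to apply \cite[Prop.~1.2.8]{Hsu02}, which reduces the construction of a manifold-valued strong solution to solving the extended Stratonovich SDE \eqref{eq_nD_fiber_lay_down_manifold_embedded} in $\mathbb{R}^{2d}$, starting from $x_0 \in \mathbb{R}^d \times \mathbb{S}^{d-1}$, in such a way that the solution never leaves the submanifold and has infinite lifetime. The key geometric observation is that the extended vector fields are tangent to $\mathbb{R}^d \times \mathbb{S}^{d-1}$ at every point of it, since $v \cdot (I-v \otimes v) = 0$ whenever $|v|=1$; hence a solution of the embedded SDE starting on the manifold automatically stays there, and, pushed back through $\mathrm{P}$, yields a solution of the original manifold-valued equation.

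For local existence, I would note that, with $\Phi \in C^\infty(\mathbb{R}^d)$, the drift and diffusion coefficients of \eqref{eq_nD_fiber_lay_down_manifold_embedded} (also after the standard Stratonovich-to-It\^o conversion, whose correction involves smooth vector fields only) are smooth and therefore locally Lipschitz on $\mathbb{R}^{2d}$. Classical SDE theory in Euclidean space then yields a unique local strong solution $X=(\xi_t,v_t)_{0 \le t < \tau}$ up to an explosion time $\tau$. The invariance of $\mathbb{S}^{d-1}$ for the $v$-component can either be read off from the tangency property above or verified directly by applying the Stratonovich chain rule to $y_t := |v_t|^2 - 1$: using the identity $v \cdot (I-v \otimes v) = (1-|v|^2)\,v^T$ one obtains the linear Stratonovich equation
\begin{align*}
\mathrm{d}y_t = 2\,y_t\, v_t \cdot \nabla \Phi(\xi_t)\,\mathrm{dt} - 2\sigma\, y_t\, v_t \circ \mathrm{d}W_t
\end{align*}
with $y_0 = 0$ almost surely, so that $y_t \equiv 0$ on $[0,\tau)$ by pathwise uniqueness of the linear equation.

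Non-explosion then follows at once. Since $|v_t| = 1$ for all $t < \tau$, the $\xi$-equation yields the deterministic bound $|\xi_t| \leq |\xi_0| + \int_0^t |v_s|\,\mathrm{d}s = |\xi_0| + t$, so $X_t$ stays in a bounded subset of $\mathbb{R}^{2d}$ on every finite interval $[0,T]$. This rules out explosion, hence $\tau = \infty$ $\mathbb{P}$-almost surely. Uniqueness of the resulting manifold-valued solution then follows from the local Lipschitz property of the coefficients of the embedded SDE together with \cite[Prop.~1.2.9]{Hsu02}.

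The main obstacle I anticipate is that $\nabla \Phi$ is only locally Lipschitz on $\mathbb{R}^d$ and may grow arbitrarily fast at infinity, so the usual sublinear-growth global existence theorems for SDEs in Euclidean space do not apply directly to \eqref{eq_nD_fiber_lay_down_manifold_embedded}. The point of the argument is that the geometric projector $(I-v \otimes v)$ built into the model forces the conservation law $|v|^2 = 1$, which in turn supplies the linear a priori bound on $|\xi_t|$ that is just strong enough to preclude blow-up. This is the manifold-valued counterpart of the pathwise Picard--Lindel\"of argument used for the flat situation in Proposition \ref{Pp_2d_model_general_form}.
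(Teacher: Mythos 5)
Your proposal is correct and follows essentially the same route as the paper: both rest on the observation that $|v_t| \equiv 1$ forces the deterministic bound $|\xi_t - \xi_0| \leq t$, which rules out finite explosion time via \cite[Prop.~1.2.6]{Hsu02}, while existence and uniqueness up to the lifetime come from the standard machinery in \cite[Sec.~1.2]{Hsu02}. The only cosmetic difference is that you verify invariance of $\mathbb{R}^d \times \mathbb{S}^{d-1}$ explicitly through the Stratonovich chain rule applied to $y_t = |v_t|^2 - 1$, whereas the paper simply works in the intrinsic manifold language and reads off $\xi_t - \xi_0 = \int_0^t v_s\,\mathrm{d}s$ directly from the test-function definition of a Stratonovich solution.
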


\begin{proof}
By \cite[Theo.~1.2.9]{Hsu02} we know that there exists a unique continuous solution $X$ to $\text{SDE}(\mathcal{N}_{0},\ldots,\mathcal{N}_{n};W,x_0)$ with state space $\mathbb{R}^d \times \mathbb{S}^{d-1}$ up to its lifetime $e$. It remains to check that $e(X)=\infty$ holds $\mathbb{P}$-almost surely. Write $X_t=(\xi_t,v_t)$, $t < e(X)$. For each $f \in C_c^\infty(\mathbb{R}^d)$ we have
\begin{align*}
f(\xi_t) - f(\xi_0) = \int_0^t v_s \cdot \nabla_\xi f (\xi_s)\,\mathrm{d}s,~0 \leq t < e(X).
\end{align*}
In particular, this identity holds for all $f \in C_c^\infty(\mathbb{R}^d)$ which satisfy $f(\xi)=\xi_j$ for $j=1,\ldots,d$, inside some balls $B_r(0)$ of radius $r>0$ large enough. Then we easily conclude 
\begin{align} \label{eq_growth_xi}
\xi_t - \xi_0 = \int_0^t v_s\,\mathrm{d}s,~0 \leq t < e(X).
\end{align}
Consequently,  $\left|\xi_t-\xi_0\right| \leq t$ for $0 \leq t < e(X)$. Now assume that $e(X) < \infty$ holds on some $A \in \mathcal{F}$ with $\mathbb{P}(A) > 0$. But then $\left|X_t\right|_{\mathbb{R}^{2n}} \rightarrow \infty$ and hence $\left|\xi_t\right| \rightarrow \infty$ as $t \uparrow e(X)$ on $A$, see e.g.~\cite[Prop.~1.2.6]{Hsu02}. But this contradicts the previous inequality.
\end{proof}

\section{Convergence to equilibrium: Mixing properties} \label{section_mixing_properties}

As mentioned in the introduction, we start in this section with the investigating of the long time behaviour of our $d$-dimensional fiber lay-down process by proving strong mixing properties. We make use of a new version of Doob's theorem derived recently in \cite{GN12}. But first we introduce some notations that are used until the end of this article.

\subsection{Motivation and setup.} \label{subsection_Motivation_and_setup} In the following let $d \in \mathbb{N}$, $d \geq 2$. We focus attention on the $d$-dimensional fiber lay-down SDE with state space $\mathbb{M}:=\mathbb{R}^d \times \mathbb{S}^{d-1}$
\begin{align} \label{eq_nD_fiber_lay_down_manifold_2}
\mathrm{d}X_t = \mathcal{N}_{0} (X_t) \,\mathrm{dt} + \sum_{j=1}^d \mathcal{N}_{j}  (X_t) \circ \mathrm{d}W^{(j)}_t
\end{align}
where $W$ is a standard $d$-dimensional Brownian motion. For computational convenience, we replace $\Phi$ by $\frac{1}{d-1} \Phi$ in SDE \eqref{eq_nD_fiber_lay_down_manifold_2}. So $\mathcal{N}_0$ is redefined as 
\begin{align*} 
\mathcal{N}_0(\xi,v):=\begin{pmatrix} v \\ -\frac{1}{d-1}(I-v \otimes v) \nabla \Phi (\xi) \end{pmatrix}, (\xi,v) \in \mathbb{R}^d \times \mathbb{S}^{d-1}.
\end{align*}
The vector fields $\mathcal{N}_j$ are defined in \eqref{Def_vector_fields_N_j}. Convention: The first $d$-components of $\mathbb{R}^{2d}$ are abbreviated by $\xi$ whereas the last $d$-variables are denoted by $v$.  We consider potentials $\Phi:\mathbb{R}^d \rightarrow \mathbb{R}$ such that $e^{-\Phi} \in L^1(\mathbb{R}^d,\mathrm{d}\xi)$. Thus without loss of generality we assume in the following 
\begin{align*}
\int_{\mathbb{R}^d} e^{-\Phi}\,\mathrm{d}\xi =1.
\end{align*}
We introduce the probability measure $\mu$ on $(\mathbb{M},\mathcal{B}(\mathbb{M}))$ defined by 
\begin{align*}
\mu:= e^{-\Phi} d\xi \otimes \nu,~\nu:= \frac{1}{\text{vol}(\mathbb{S}^{d-1})} \, \mathcal{S}.
\end{align*}
Here $\mathcal{S}$ denotes the surface measure of $\mathbb{S}^{d-1}$, $\text{vol}(\mathbb{S}^{d-1})$ the surface area of $\mathbb{S}^{d-1}$ and $\mathcal{B}(\mathbb{M})$ the Borel-sigma-algebra on $\mathbb{M}$. The Kolmogorov operator $L=  v \cdot \nabla_\xi - \text{grad}_{\mathbb{S}^{d-1}} V + \frac{\sigma^2}{2}  \, \Delta_{\mathbb{S}^{d-1}}$, see \eqref {eq_generator_nd_model}, associated to SDE \eqref{eq_nD_fiber_lay_down_manifold} is also sometimes denoted by $L^{\text{K}}$ where $V: \mathbb{M} \rightarrow \mathbb{R}$ is now given as $V(\xi,v)= \frac{1}{d-1} \,\nabla \Phi(\xi) \cdot v$.
\medskip

Moreover, we denote by $X^x=\{ X^x_t\}_{t \geq 0}$ the solution to SDE \eqref{eq_nD_fiber_lay_down_manifold_2} with initial value $x \in \mathbb{M}$. Formally, its corresponding probability density $f_t$ (with respect to $\mathrm{d}\xi \otimes \mathcal{S}$ and Dirac initial state $x$) satisfies the Fokker-Planck equation $\partial_t f = L^{\text{FP}} f$ with $L^{\text{FP}}$ being the Fokker-Planck operator associated to the $d$-dimensional fiber lay-down model. The latter is given as the (algebraic) adjoint of $L^{\text{K}}$ in $L^2(\mathbb{M},\mathrm{d}\xi \otimes \mathcal{S})$. Thus, using Lemma \ref{Lm_adjoint_V} from the Appendix below, we get
\begin{align*}
L^{\text{FP}}= - v \cdot \nabla_\xi + \text{grad}_{\mathbb{S}^{d-1}} V -  \nabla \Phi (\xi) \cdot v + \frac{\sigma^2}{2}  \, \,\Delta_{\mathrm{S}^{d-1}}. 
\end{align*}
So, up to normalization, a stationary solution to the Fokker-Planck equation is now given by 
\begin{align*}
(\xi,v) \mapsto F(\xi,v) := e^{-\Phi(\xi)},~(\xi,v) \in \mathbb{M}.
\end{align*} 
Hence we expect that $f_t$ converges towards $\frac{1}{\text{vol}(\mathbb{S}^{d-1})} \, F$ as $t \rightarrow \infty$. In other words, $X_t^x$ should be distributed accordingly to $\mu$ for large values of $t \geq 0$. Now formally, $u(t,x)= \mathbb{E}[u_0(X^x_t)]$ solves the Kolmogorov PDE $\partial_t u = L u$ with initial state $u(t=0)=u_0$. Hence altogether the subsequent analysis in the previous and the upcoming section has the interpretation of proving convergence of $\mathbb{E}[u_0(X^x_t)]$ towards $\mathbb{E}[u_0(X^x_\infty)]:=\int_\mathbb{M} u_0 \,\mathrm{d}\mu$. We now start with the mentioned strong mixing properties.

\subsection{Mixing properties}

First, we need the following lemma. Therefore, for given vector fields $\mathcal{V}_1,\ldots,\mathcal{V}_r,$ on some manifold $\mathbb{X}$ we denote the least $\mathbb{R}$-vector space including all $\mathcal{V}_j$, $j=1,\ldots,r,$ which is closed under the Lie-bracket operation by $L(\mathcal{V}_1,\ldots,\mathcal{V}_r)$.

\begin{Lm} \label{Lm_Strong_Feller}
For the vector fields $\mathcal{N}_j$, $j=0,1,\ldots,d,$ from \eqref{Def_vector_fields_N_j} it holds
\begin{align*}
\mbox{dim} ~L\big(\mathcal{N}_1,\ldots,\mathcal{N}_d,\mathcal{N}_0 + \frac{\partial}{\partial t}\big)=2d \mbox{ at each point of $(0,\infty) \times \mathbb{M}$}
\end{align*}
where we assume $\sigma > 0$.
\end{Lm}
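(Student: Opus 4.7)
The state space $(0,\infty)\times\mathbb{M}$ has dimension $1+d+(d-1)=2d$, so the claim is equivalent to the parabolic H\"ormander condition that the pointwise evaluation of the Lie algebra $L(\mathcal{N}_1,\ldots,\mathcal{N}_d,\mathcal{N}_0+\partial_t)$ fills the entire tangent space. The plan is to exhibit $2d$ pointwise linearly independent elements of this Lie algebra by splitting the tangent space into its three natural blocks: the ``vertical'' block (tangent to $\mathbb{S}^{d-1}$), the ``horizontal'' $\xi$-block, and the ``temporal'' $\partial_t$-direction. All computations are carried out in the embedded coordinates on $\mathbb{R}^{2d}$ from Section \ref{isotropic_nd_model}, where the fields $\mathcal{N}_j$ are the trivial extensions.

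First, for $j=1,\ldots,d$ the vector field $\mathcal{N}_j\equiv\sigma(0,P(v)e_j)$, with $P(v):=I-v\otimes v$, lives entirely in the $T_v\mathbb{S}^{d-1}$ slot. Since $P(v)$ is the orthogonal projector onto the $(d-1)$-dimensional space $T_v\mathbb{S}^{d-1}$, its columns $P(v)e_1,\ldots,P(v)e_d$ span $T_v\mathbb{S}^{d-1}$; combined with $\sigma>0$ this already produces $d-1$ independent vectors in the Lie algebra at every point.

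Next I would compute $[\mathcal{N}_0+\partial_t,\mathcal{N}_j]=[\mathcal{N}_0,\mathcal{N}_j]$ (the $\partial_t$-part drops out because $\mathcal{N}_j$ is time-independent). Applied to the coordinate function $f(\xi,v)=\xi_k$, the bracket reduces to
\begin{align*}
[\mathcal{N}_0,\mathcal{N}_j]\xi_k = \mathcal{N}_0(\mathcal{N}_j\xi_k) - \mathcal{N}_j(\mathcal{N}_0\xi_k) = 0 - \mathcal{N}_j v_k = -\sigma\,(P(v)e_j)_k,
\end{align*}
so the $\xi$-component of $[\mathcal{N}_0,\mathcal{N}_j]$ is exactly $-\sigma P(v)e_j$. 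Whatever $v$-component this bracket may also carry (via the $\Phi$-dependent part of $\mathcal{N}_0$) lies in $T_v\mathbb{S}^{d-1}$ and is already absorbed by $\mathrm{span}(\mathcal{N}_1,\ldots,\mathcal{N}_d)$. Hence, modulo the vertical directions, the $d$ brackets contribute precisely the subspace $v^{\perp}\subset\mathbb{R}^d$ in the $\xi$-slot, giving a further $d-1$ independent directions.

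Finally, the field $\mathcal{N}_0+\partial_t$ itself has temporal component $1$ and $\xi$-component $v\ne 0$; since the $\partial_t$-direction has not appeared so far and $v\notin v^\perp$, this field provides two more independent directions. The total count is $(d-1)+(d-1)+2=2d$ linearly independent vectors in $L(\mathcal{N}_1,\ldots,\mathcal{N}_d,\mathcal{N}_0+\partial_t)$ at each point, which gives the claim. The only nontrivial ingredient is the Lie-bracket computation in the embedded coordinates; everything else is linear bookkeeping, and I expect that checking the $\xi$-component of $[\mathcal{N}_0,\mathcal{N}_j]$ is $-\sigma P(v)e_j$ cleanly (and that the remaining $v$-component can be absorbed into $\mathrm{span}\,\mathcal{N}_j$) is the main point requiring care rather than any deeper obstacle.
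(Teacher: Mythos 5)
There is a genuine gap in the final step of your argument, and it is a counting error rather than a subtle issue. You correctly obtain $d-1$ independent directions in the vertical block $T_v\mathbb{S}^{d-1}$ from $\mathcal{N}_1,\dots,\mathcal{N}_d$, and $d-1$ more (modulo the vertical block) from the $\xi$-components $-\sigma P(v)e_j$ of the brackets $[\mathcal{N}_0+\partial_t,\mathcal{N}_j]$, which span $v^\perp\subset\mathbb{R}^d$. But then you claim that $\mathcal{N}_0+\partial_t$ ``provides two more independent directions'' because it is transverse both to the $\partial_t$-hyperplane and to $v^\perp$. A single vector can only increase the dimension of a span by at most one. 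Concretely, the span of everything you have listed consists of vectors of the form $(c,\,w+cv,\,*)$ with $c\in\mathbb{R}$, $w\in v^\perp$, $*\in T_v\mathbb{S}^{d-1}$, which is a $(2d-1)$-dimensional subspace; the element $(0,v,0)$ is not in it, since forcing the temporal component to vanish also kills the radial $\xi$-component $cv$. So you stop one dimension short.

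The paper closes exactly this gap by taking one more bracket: it forms the sum of double brackets
\begin{align*}
\mathcal{A}=\sum_{j=1}^d \left[\mathcal{N}_j,\left[\mathcal{N}_j,\mathcal{N}_0+\partial_t\right]\right],
\end{align*}
whose $\xi$-component is $-(d-1)v$ (this is where Lemma \ref{Lm_formula_sphere_strong_feller}, i.e.\ $\Delta_{\mathbb{S}^{d-1}}I_{\mathbb{S}^{d-1}}=-(d-1)I_{\mathbb{S}^{d-1}}$, enters) and which carries no $\partial_t$-component. That supplies the missing radial $\xi$-direction independently of the temporal direction, and then the list $\mathcal{N}_0+\partial_t,\ \mathcal{N}_1,\dots,\mathcal{N}_d,\ \mathcal{A}_1,\dots,\mathcal{A}_d,\ \mathcal{A}$ does contain $2d$ pointwise independent vectors. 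To repair your proof, add this double bracket (or any equivalent iterated bracket whose $\xi$-component is a nonzero multiple of $v$ and whose $\partial_t$-component vanishes) to your generating set; the remainder of your bookkeeping is sound.
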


\begin{proof}
We may set $\sigma=1$ and choose $p=(t,\xi,v) \in (0,\infty) \times \mathbb{M}$ arbitrary. First note that 
\begin{align*}
\text{span} \{v\} \oplus \text{span} \{ (I-v \otimes v)e_j~|~j=1,\ldots,d \} = \text{span} \{v\} \oplus T_v\mathbb{S}^{d-1} =\mathbb{R}^d.
\end{align*} 
Observe that $\left[\mathcal{N}_j,\mathcal{N}_0 + \frac{\partial}{\partial t}\right]$ for $j=1,\ldots,d,$ is of the form
\begin{align*}
\mathcal{A}_j:=\left[\mathcal{N}_j,\mathcal{N}_0 + \frac{\partial}{\partial t}\right] = (I-v \otimes v) e_j \cdot \nabla_\xi + f_1^{\,(j)}(\xi,v) \cdot \nabla_v 
\end{align*}
for some smooth function $f_1^{\,(j)}: \mathbb{M} \rightarrow \mathbb{R}^d$. Hence, by the proof of Lemma \ref{Lm_formula_sphere_strong_feller} from the Appendix, we obtain 
\begin{align*}
\mathcal{A}:=\sum_{j=1}^d \left[\mathcal{N}_j,\left[\mathcal{N}_j,\mathcal{N}_0 + \frac{\partial}{\partial t}\right]\right] = -(d-1) v \cdot \nabla_{\xi} + f_2(\xi,v) \cdot \nabla_v.
\end{align*}
Here $f_2: \mathbb{M} \rightarrow \mathbb{R}^d$ is again smooth. So finally, under
\begin{align*}
\left(\mathcal{N}_0 + \frac{\partial}{\partial t}\right)(p),~\mathcal{N}_1(p),\ldots,~ \mathcal{N}_d(p),~\mathcal{A}_1(p),\ldots,~\mathcal{A}_d(p),~\mathcal{A}(p)
\end{align*}
we may always choose $2d$-linear independent vectors. The claim follows since the manifold $(0,\infty) \times \mathbb{M}$ has dimension $2d$.
\end{proof}

We need one more lemma. $X^x$ denotes the process solving \eqref{eq_nD_fiber_lay_down_manifold_2} with state space $\mathbb{M}$ defined on some underlying probability space $(\Omega, \mathcal{F} , \mathbb{P}, \{\mathcal{F}_t\}_{t \geq 0})$ satisfying the usual conditions. $B_r(z)$ denotes the open ball with radius $r>0$ centered at $z \in \mathbb{R}^d$.

\begin{Lm} \label{Lm_Blag}
Let $\Phi \in C^\infty(\mathbb{R}^d)$ and let $f \in C^\infty_c(\mathbb{M})$. The function $u(t,x):=\mathbb{E}[f(X_t^x)]$, $t \geq 0$, $x \in \mathbb{M}$, is continuously differentiable in $t$, twice continuously differentiable in $x$ and satisfies
\begin{align*}
\frac{\partial}{\partial t} u(t,x) = L u (t,x)
\end{align*} 
where $L:C^\infty(\mathbb{M}) \rightarrow C^\infty(\mathbb{M})$, $L= \mathcal{N}_0 + \frac{1}{2}\sum_{j=1}^d \mathcal{N}_j^2$.
\end{Lm}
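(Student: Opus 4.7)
The natural strategy is to combine a Dynkin-type integrated identity for $u$ with the hypoellipticity of $\partial_t - L$ that Lemma \ref{Lm_Strong_Feller} has just prepared. First, I convert the Stratonovich SDE \eqref{eq_nD_fiber_lay_down_manifold_2} to It\^{o} form and apply It\^{o}'s formula to $f(X_t^x)$. Since $f \in C_c^\infty(\mathbb{M})$, both $Lf$ and the vector-field derivatives $\mathcal{N}_j f$ are bounded on $\mathbb{M}$, so the stochastic integral is a true (not just local) martingale; taking expectations and using Fubini produces the Dynkin identity
\begin{align*}
u(t,x) \;=\; f(x) + \int_0^t (P_s Lf)(x)\,\mathrm{d}s, \qquad P_s g(x) := \mathbb{E}[g(X_s^x)],
\end{align*}
which in particular shows that $u$ is uniformly bounded on $[0,\infty)\times \mathbb{M}$.

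The next step is to show that $u$ is a distributional solution of $(\partial_t - L)u = 0$ on $(0,\infty)\times \mathbb{M}$. I would pair the Dynkin identity above with a test function $\varphi \in C_c^\infty((0,\infty)\times\mathbb{M})$, integrate by parts in space using the formal adjoint computed in Lemma \ref{Lm_adjoint_V}, justify Fubini by the boundedness of $f$ and $Lf$, and use the Markov property of $X^x$ to identify the result as the desired distributional PDE. Once $u$ is seen to be a distribution solution, Lemma \ref{Lm_Strong_Feller} says precisely that the operator $\partial_t - L$ satisfies the H\"{o}rmander bracket condition at every point of $(0,\infty)\times\mathbb{M}$, so $\partial_t - L$ is hypoelliptic. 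Hence $u \in C^\infty((0,\infty)\times \mathbb{M})$, and $\partial_t u = L u$ holds classically there.

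Finally, I would pass to the boundary $t=0$. The Dynkin identity immediately gives that $u$ is continuous up to $t=0$ with $u(0,\cdot)=f$, and combined with continuity of $s \mapsto (P_s Lf)(x)$ (the weak Feller property, which follows from almost sure continuous dependence of $X^x_s$ on $x$ together with the non-explosion statement of Proposition \ref{Pp_solution_d_dimensional_equation}) it yields the right derivative $\partial_t u(0,x) = Lf(x) = Lu(0,x)$. Since $f$ itself is smooth in $x$, the required $C^{1,2}$ regularity of $u$ on all of $[0,\infty)\times\mathbb{M}$ follows.

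The main obstacle will be the middle step: verifying that $u$ is a distribution solution on the non-compact space $\mathbb{M} = \mathbb{R}^d\times\mathbb{S}^{d-1}$. Compact support of $\varphi$ annihilates any would-be boundary term at $|\xi|\to\infty$, but some care is needed so that all interchanges of expectation, spatial integration and time integration are dominated, and so that Lemma \ref{Lm_adjoint_V} is applied to the correct combination of derivatives. Once that distributional identification is secured, H\"{o}rmander's hypoellipticity theorem does the heavy lifting and the lemma follows.
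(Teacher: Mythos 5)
Your approach is quite different from the paper's. The paper does not invoke hypoellipticity for this lemma at all: it localizes by multiplying the vector fields $\mathcal{N}_j$ by smooth cutoffs $\chi_1(\xi)\chi_2(v)$, so that the modified SDE in $\mathbb{R}^{2d}$ has smooth, compactly supported coefficients (hence globally Lipschitz in It\^o form); it then cites the classical regularity results of Blagove\v{s}\v{c}enski\u{\i}--Fre\u{\i}dlin \cite{BF61} and Da Prato--R\"ockner \cite{DPRO11} for $\widetilde{u}(t,z)=\mathbb{E}[\widetilde{f}(\widetilde{X}_t^z)]$, and finally observes that for $y$ near $x$ and $t<T$, the truncated process agrees with $X^y$ pathwise, so the $C^{1,2}$ regularity and the PDE are inherited locally. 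Hypoellipticity and \cite{IK74} only enter later, in Theorem \ref{Thm_strong_mixing}, to obtain the smooth transition density needed for the strong Feller property.

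Your proposal has a genuine gap precisely at the step you yourself flag as "the main obstacle." The Dynkin identity yields $\partial_t u(t,x)=P_t(Lf)(x)$, i.e.\ $L$ is applied \emph{before} the semigroup. To say $u$ is a distributional solution of the backward equation $\partial_t u = L_x u$, you need to move $L$ to the \emph{outside} of $P_t$, equivalently to show $\int_\mathbb{M} P_t(Lf)\,\varphi\,dx = \int_\mathbb{M} P_tf\,L^*\varphi\,dx$ for each $t$. This commutation of $L$ past $P_t$ (through the dual semigroup) is not a formal consequence of It\^o/Dynkin plus Lemma \ref{Lm_adjoint_V}; it requires a priori regularity of $P_t$ or of the transition kernel, which is exactly what the lemma is meant to establish. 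What the Dynkin/It\^o pairing \emph{does} give you cheaply is the distributional \emph{forward} equation for the law $\mu_t^x(dy)$ in the variables $(t,y)$ with $x$ fixed; hypoellipticity then yields smoothness of $p_t(x,\cdot)$ in $(t,y)$ but not in $x$, so you would still not obtain the $C^{1,2}$ regularity of $u(t,x)$ in $(t,x)$. The standard way to upgrade to joint $(t,x,y)$-smoothness from a H\"ormander condition is the Ichihara--Kunita argument \cite[Theo.~3]{IK74}, which the paper explicitly invokes --- but only in Theorem \ref{Thm_strong_mixing}, not here. If you wish to pursue your route, you should cite \cite{IK74} directly for the smooth transition density and then obtain $u(t,x)=\int p_t(x,y)f(y)\,dy$ and its PDE by differentiation under the integral; but note this is no shorter than the paper's localization argument and logically reorders the two proofs.
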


\begin{proof}
This is a local statement, so let $x=(\xi^x,v^x) \in \mathbb{M}$ and let $T>0$ be arbitrary but fixed. Recall that the solution $X^y=\{ X^y_t\}_{t \geq 0}$, $y \in \mathbb{M}$, to the abstract equation \eqref{eq_nD_fiber_lay_down_manifold_2}, i.e., $\text{SDE}(\mathcal{N}_{0},\ldots,\mathcal{N}_{d};W,y)$, can be obtained as the solution to the embedded Stratonovich SDE in $\mathbb{R}^{2d}$ in which the vector fields $\mathcal{N}_j$ are extended to $\mathbb{R}^{2d}$ in the obvious way. Define $r_1=T$, $r_2=2T$, and choose some $\chi_1,\chi_2 \in C^\infty_c(\mathbb{R}^{d})$ with $\chi_1=1$ on $B_{r_2}(\xi^x)$, $\chi_2=1$ on  $B_2(0)$, $0 \leq \chi_1,\chi_2 \leq 1$ and define $\widetilde{\mathcal{N}_j}$ on $\mathbb{R}^{2d}$ as
\begin{align*}
\widetilde{\mathcal{N}}_j (\xi,v) = \chi_1(\xi) \,\chi_2(v) \,\mathcal{N}_j(\xi,v),~(\xi,v) \in \mathbb{R}^{2d},~j=0,\ldots,d.
\end{align*}
Now let $\widetilde{X}^y$ be the solution to the Stratonovich SDE
\begin{align} \label{SDE_widetildeX}
\mathrm{d} \widetilde{X}_t = \widetilde{\mathcal{N}}_0 \, \mathrm{dt} + \sum_{j=1}^d \widetilde{\mathcal{N}}_j \circ \mathrm{d}W_t^{(j)}
\end{align}
in $\mathbb{R}^{2d}$ with initial condition $\widetilde{X}_0=y \in \mathbb{M}$. All $\widetilde{\mathcal{N}}_j$ are still tangential to $\mathbb{M}$. Thus by \cite[Prop.~1.2.8]{Hsu02} we conclude that $\widetilde{X}^y$ stays on $\mathbb{M}$ and, as in the proof of Proposition \ref{Pp_solution_d_dimensional_equation}, $\widetilde{X}^y$ has infinite lifetime. Hence for each $y=(\xi^y,v^y) \in \mathbb{M}$ with $|\xi^y - \xi^x| < r_1$ it follows that $\widetilde{X}^y$ and $X^y$ both solve the Stratonovich SDE \eqref{SDE_widetildeX} in $\mathbb{R}^{2d}$ up to time $T$. Clearly, the latter SDE can be written in It\^{o}-form with global Lipschitz coefficients. Consequently, we have $\widetilde{X}_t^y=X^y_t$ \,$\mathbb{P}$-a.s.~for all $0 \leq t < T$ and all $y$ as specified above.\\
Now choose any extension of $f$ to some $\widetilde{f} \in C^\infty_c(\mathbb{R}^{2d})$. And since the equivalent It\^{o}-form of \eqref{SDE_widetildeX} has $C^\infty$-coefficients having compact support, we get that $\widetilde{u}(t,z):=\mathbb{E}[\widetilde{f}(\widetilde{X}_t^z)]$, $t \geq 0$, $z \in \mathbb{R}^{2d}$, is continuously differentiable in $t$, twice continuously differentiable in $z$ and satisfies
\begin{align*}
\frac{\partial}{\partial t} \widetilde{u}(t,z) = \widetilde{L} \widetilde{u} (t,z)
\end{align*} 
where $\widetilde{L}:C^\infty(\mathbb{R}^{2d}) \rightarrow C^\infty(\mathbb{R}^{2d}),~\widetilde{L}=\widetilde{\mathcal{N}}_0  + \sum_{j=1}^d \widetilde{\mathcal{N}}_j^{\,2}$, see  \cite[Prop.~2.7]{DPRO11} and \cite{BF61}. Hence the claim follows since $u(t,y)=\widetilde{u}(t,y)$ for $0 \leq t < T$, $y \in B_{r_1}(\xi^x) \times \mathbb{S}^{d-1}$, as well as since $\widetilde{L}h(y)=Lh(y)$ holds for each $h \in C^2(\mathbb{M})$ and all such $y$.
\end{proof}

We remark that under the assumption of the previous Lemma, $u(t,x)$ is even infinitely often differentiable in $x$, see \cite{BF61}. Nevertheless, we do not really need this stronger conclusion. Now we end up with our desired theorem. $B(\mathbb{M})$ (or $C(\mathbb{M})$ respectively) denotes the set of all Borel-measurable (or continuous respectively) real-valued functions on $\mathbb{M}$. The index $b$ denotes all bounded functions of the underlying set of functions. 

\begin{Thm} \label{Thm_strong_mixing}
Let $d \in \mathbb{N}$, $d \geq 2$, and let $\Phi \in C^\infty(\mathbb{R}^d)$ with $\int_{\mathbb{R}^d} e^{-\Phi}\,\mathrm{d}\xi=1$. Assume that $\sigma >0$. Then the $d$-dimensional fiber lay down process is strongly mixing, i.e., for all $f \in B_b(\mathbb{M})$ we have
\begin{align*}
\lim_{t \rightarrow \infty} \mathbb{E}[f(X_t^x)] = \int_\mathbb{M} f \,\mathrm{d}\mu
\end{align*}
uniformly in $x \in \mathbb{M}$ on compact subsets of $\mathbb{M}$. 
\end{Thm}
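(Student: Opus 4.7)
The plan is to invoke the new version of Doob's theorem from \cite{GN12} cited by the authors. This requires verifying three ingredients: (i) $\mu$ is an invariant probability measure with full topological support, (ii) the transition semigroup $P_t f(x) := \mathbb{E}[f(X_t^x)]$ enjoys the strong Feller property for every $t>0$, and (iii) the process is topologically irreducible in the sense that $P_t(x,U)>0$ for each nonempty open $U \subseteq \mathbb{M}$, $x \in \mathbb{M}$ and $t>0$. Given these, the new Doob-type statement should yield $P_t(x,\cdot) \to \mu$ in total variation, uniformly for $x$ in compact subsets of $\mathbb{M}$, which in turn delivers the claimed convergence of $\mathbb{E}[f(X_t^x)]$ for bounded Borel $f$.

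The first ingredient reduces to checking $\int_{\mathbb{M}} L g \,\mathrm{d}\mu = 0$ for all $g \in C_c^\infty(\mathbb{M})$. This is exactly the statement that the density $e^{-\Phi}$ of $\mu$ (with respect to $\mathrm{d}\xi \otimes \mathcal{S}$, up to normalization) is annihilated by the formal Fokker-Planck adjoint $L^{\mathrm{FP}}$ computed in Subsection~\ref{subsection_Motivation_and_setup} via Lemma~\ref{Lm_adjoint_V}. A direct computation using $\nabla_\xi e^{-\Phi} = -(\nabla\Phi)\,e^{-\Phi}$ and integration by parts on $\mathbb{S}^{d-1}$ settles this. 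Full support is immediate since $e^{-\Phi}>0$ and the surface measure on $\mathbb{S}^{d-1}$ has full support.

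The second ingredient is the heart of the argument and is engineered by Lemmas~\ref{Lm_Strong_Feller} and~\ref{Lm_Blag}. Lemma~\ref{Lm_Strong_Feller} verifies the H\"{o}rmander bracket condition for the parabolic generator $\partial_t - L$ on $(0,\infty) \times \mathbb{M}$, so by H\"{o}rmander's theorem the operator $\partial_t - L$ is hypoelliptic there. Lemma~\ref{Lm_Blag} shows that for $f \in C_c^\infty(\mathbb{M})$ the function $u(t,x)=P_t f(x)$ is a classical solution of $\partial_t u = Lu$, hence hypoellipticity forces $u \in C^\infty((0,\infty)\times\mathbb{M})$; in particular $x\mapsto P_t f(x)$ is continuous. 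A standard monotone-class/bounded convergence approximation then extends continuity of $P_t f$ to all $f \in B_b(\mathbb{M})$, which is the strong Feller property for every $t>0$.

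For topological irreducibility I plan to appeal to the Stroock-Varadhan support theorem for Stratonovich SDEs: the support of the law of $X_t^x$ equals the closure of the set of endpoints of smooth controlled trajectories of the deterministic system obtained by replacing $\circ\,\mathrm{d}W_t^{(j)}$ by $u_j(t)\,\mathrm{d}t$ for $u \in L^2([0,t];\mathbb{R}^d)$. The bracket condition of Lemma~\ref{Lm_Strong_Feller} implies, via the Chow-Rashevsky theorem, that this control system is locally accessible; combined with the free translation of $\xi$ coming from $\mathrm{d}\xi_t = v_t\,\mathrm{d}t$ and the rotational freedom on $\mathbb{S}^{d-1}$ generated by the $\mathcal{N}_j$, $j=1,\dots,d$, one obtains global controllability on $\mathbb{M}=\mathbb{R}^d\times\mathbb{S}^{d-1}$ in arbitrary time $t>0$. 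I expect the controllability/support verification to be the main obstacle: one must carefully produce a chain of admissible controls reaching an arbitrary prescribed open set, handling the non-trivial drift $-(I-v\otimes v)\nabla\Phi/(d-1)$ which couples $\xi$ and $v$. Once (i)-(iii) are in place, a direct application of the \cite{GN12} version of Doob's theorem completes the proof, the uniformity on compacts being built into the conclusion of that theorem.
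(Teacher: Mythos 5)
Your ingredients (i) and (ii) are essentially the same as the paper's: invariance of $\mu$ via the adjoint computation of Lemma~\ref{Lm_adjoint_V}, and strong Feller via the H\"ormander/hypoellipticity argument encoded in Lemmas~\ref{Lm_Strong_Feller} and~\ref{Lm_Blag} (the paper routes this through Ichihara--Kunita rather than quoting H\"ormander's theorem directly, but the content is the same). However, your ingredient (iii) is a genuine gap, and in fact it is \emph{false} for this process. Topological irreducibility in the form you state --- $P_t(x,U)>0$ for every nonempty open $U\subseteq\mathbb{M}$, every $x$ and every $t>0$ --- cannot hold here: from Identity~\eqref{eq_growth_xi} one has $|\xi_t-\xi_0|\le t$ almost surely (the velocity lives on $\mathbb{S}^{d-1}$, so $\xi$ moves at unit speed), hence for any fixed $t$ the law of $X_t^x$ charges no open set whose $\xi$-projection lies outside $\overline{B_t(\xi_0)}$. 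The paper explicitly flags this in the remark following Theorem~\ref{Thm_strong_mixing}: the classical Doob theorem of \cite{DZ96} is inapplicable precisely because the $t_0$-irreducibility fails, and this failure is the entire reason for invoking the generalized version from \cite{GN12}, which replaces strong irreducibility by a much weaker condition (verified in the paper merely from strict positivity of $\mu$ and $T_t 1_{\mathbb{M}}=1_{\mathbb{M}}$, via \cite[Prop.~4.3]{GN12}). Your proposed route through the Stroock--Varadhan support theorem and Chow--Rashevsky would at best give local accessibility in small time; it cannot manufacture global controllability to an arbitrary open set in an arbitrarily short time, because the control system inherits the same finite propagation speed in $\xi$. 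So the controllability step you anticipated as ``the main obstacle'' is not merely difficult --- it is unprovable as formulated, and the correct move is to drop the strong irreducibility requirement altogether, as \cite{GN12} allows.
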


\begin{proof}
We apply a version of Doob's theorem, see \cite[Theo.~4.6]{GN12}. So we define operators $T_t$, $t \geq 0$, $T_t: B_b(\mathbb{M}) \rightarrow B_b(\mathbb{M})$ as $T_tf(x) = \mathbb{E}[f(X_t^x)]$ for $f \in B_b(\mathbb{M})$ and $x \in \mathbb{M}$. These operators are indeed well-defined. Therefore, let $X$ be the process defined on the path space $W(\mathbb{M})$ by $X_t(\omega)=\omega(t)$, $t\geq 0$, $\omega \in W(\mathbb{M})$. Then it holds $\mathbb{E}[f(X_t^x)] = \mathbb{E}_x[f(X_t)]$ where $\mathbb{E}_x$ denotes expectation with respect to $\mathbb{P}_x = \mathbb{P} \circ (X^x)^{-1}$. Thus for $A \in \mathcal{B}(\mathbb{M})$ and all $t \geq 0$ we have $T_t 1_A(x) = \mathbb{P}_x \{ X_t \in A \}$ which is $\mathcal{B}(\mathbb{M})$-measurable in the variable $x \in \mathbb{M}$, see \cite[Sec.~1.3]{Hsu02}. Hence $T_tf \in B_b(\mathbb{M})$ for all $f \in B_b(\mathbb{M})$. Furthermore, each $T_t$ is a linear, positive operator satisfying $\|T_tf\|_{\infty} \leq \|f\|_{\infty}$ for all $f \in B_b(\mathbb{M})$. Here $\|\cdot\|_{\infty}$ is the usual supremums norm. Moreover, for $A \in \mathcal{B}(\mathbb{M})$ and $x \in \mathbb{M}$ we have $T_t T_s 1_A (x) = T_{t+s} 1_A(x)$. This follows by the strong Markov property of the $L$-diffusion $X^x$, see \cite[Theo.~1.3.7]{Hsu02}. By applying monotone convergence we obtain $T_t T_s f = T_{t+s} f$ for each $f \in B_b(\mathbb{M})$, i.e., $\left(T_t\right)_{t \geq 0}$ is a semigroup.\\
Next let $f \in C^\infty_c(\mathbb{M})$. The function $u(t,x):=\mathbb{E}[f(X_t^x)]$, $t \geq 0$, $x \in \mathbb{M}$, is continuously differentiable in $t$, twice continuously differentiable in $x$ and satisfies
\begin{align*}
\frac{\partial}{\partial t} u(t,x) = L u (t,x),
\end{align*} 
see Lemma \ref{Lm_Blag}. Now choose $r>0$ such that the support of $f$ is contained in $B_r(0) \times \mathbb{S}^{d-1}$. By using Identity \eqref{eq_growth_xi} observe that for fixed $T\geq 0$, the function $u(t,\cdot)$ vanishes outside $K:=B_{r+T}(0) \times \mathbb{S}^{d-1}$ for $t \in [0,T]$.  Moreover, note that $\frac{\partial}{\partial t} u(t,x) = \mathbb{E}[Lf(X_t^x)]$ holds by It\^{o}'s formula for all $t \geq 0$. Thus 
\begin{align*}
\left|\frac{\partial}{\partial s} u(s,x)\right| \leq \sup_{y \in B_{r+2T}(0) \times \mathbb{S}^{d-1}} |Lf(y)| < \infty,~s \in [0,T],~x \in K.
\end{align*}
Note that $\frac{\partial}{\partial s} u(\cdot,\cdot)$ is $\mathcal{B}[0,t] \otimes \mathcal{B}(\mathbb{M})$-measurable for all $t \geq 0$. Now integrate the identity $u(t,\cdot) - f(\cdot) =\int_0^t \frac{\partial}{\partial s} u(s,\cdot) \, \mathrm{d}s$, $t \in [0,T]$, with respect to $\mu$. By the previous we may apply Fubini's theorem and get
\begin{align*}
\int_\mathbb{M} T_t f \, \mathrm{d}\mu - \int_\mathbb{M} f \, \mathrm{d}\mu = \int_0^t \int_\mathbb{M} L u(s,x)\, \mathrm{d}\mu(x) \,\mathrm{d}s =0,~t \in [0,T],
\end{align*}
where the last equality follows since $\int_\mathbb{M} L h \,\mathrm{d}\mu=0$ holds for all $h \in C^2_c(\mathbb{M})$, see Proposition \ref{Lm_adjoint_V} from the Appendix below. Hence we have $\int_\mathbb{M} T_t f \, d\mu = \int_\mathbb{M} f \, d\mu$ for all $f \in C_c^\infty(\mathbb{M})$. Thus by the functional monotone class argument, see \cite[Ch.~0,~Theo.~2.3]{BG68}, we easily conclude $\int_\mathbb{M} T_t f\, \mathrm{d}\mu = \int_\mathbb{M} f\, \mathrm{d}\mu$ for all $f \in B_b(\mathbb{M})$, i.e., $\mu$ is an invariant measure for $(T_t)_{t \geq 0}$.\\
Furthermore, Lemma \ref{Lm_Strong_Feller} just says that condition (E) from \cite{IK74} is satisfied. Hence by \cite[Theo.~3]{IK74} we obtain that there exists $p_t(x,y) \in C^\infty((0,\infty) \times \mathbb{M} \times \mathbb{M})$ with $\mathbb{P}\{X_t^x \in \mathrm{d}y \}=p_t(x,y)\, \mathrm{d}y$, $t > 0$. And since $X_t^x$ has infinite lifetime, i.e., $T_t 1_\mathbb{M}= 1_\mathbb{M}$ holds for all $t \geq 0$, the existence of such a function $p$ then implies by \cite[Lem.~5.1]{IK74} that $(T_t)_{t\geq 0}$ is already strongly Feller. Here strongly Feller means $T_t B_b(\mathbb{M}) \subset C_b(\mathbb{M})$, $t > 0$.\\
Next note that condition (ii) of Propositon 4.3 from \cite{GN12} is obviously satisfied since $\mu$ is stricly positive and $1_\mathbb{M}$ plays the role of the element $e$ in \cite[Prop.~4.3]{GN12}. Together with our previous conditions this already implies the weak irreducibiliy assumption from \cite{GN12}.\\
It is left to verify the strong continuity assumption in $L^1(\mathbb{M},\mu)$, i.e., condition (4.1) in \cite[Theo.~4.6]{GN12}. To verify the latter, it suffices to show that $\lim_{t \rightarrow 0}T_tf(x)=f(x)$ holds for all $f \in C_b(\mathbb{M})$ and all $x \in \mathbb{M}$, see \cite[Rem.~4.7]{GN12}. But this is obvious, use Lebesgue's dominated convergence.\\
Finally, observe that also condition (iv) of \cite[Theo.~3.4]{GN12} clearly holds, hence the claim follows by the generalized version of Doob's theorem, see \cite[Theo.~4.6]{GN12}. 
\end{proof}

\begin{Rm}
We remark that the original version of Doob's theorem, see \cite{DZ96} is not applicable in case of the fiber lay-down process. This is because the strong irreducibility assumption, i.e., the $t_0$-irreducibility property, is not satisfied in our case. The latter means that for some $t_0 >0$ it holds $T_{t_0} 1_\Gamma (x) > 0$ for all $x \in \mathbb{M}$ and each non-empty open $\Gamma \subset \mathbb{M}$. But we have already seen in the previous proof that such a property can't be satisfied simply due to the fact that the velocity vector of our fiber lay-down process lives on the sphere.
\end{Rm}

\section{Convergence to equilibrium: Hypocoercivity} \label{section_Hypocoercivity}

Now we switch to functional analysis. This section is independent from the stochastic ergodic results and statements obtained in the sections before. Now we apply the fascinating hypocoercivity theory derived in \cite{DMS11} and generalize \cite{DKMS11} to our higher-dimensional setting. For $d=2$, Theorem \ref{Thm_Hypocoercivity} below contains the result from \cite{DKMS11}. In contrast to \cite{DKMS11} we stay in the \textit{Kolmogorov picture}. In this way we obtain a nice comparision with \cite{GK08}. However, due to the underlying $L^2$-framework, the Kolmogorov setting is equivalent to the Fokker-Planck setting, see Remark \ref{Rm_equivalence_Kolmogorov_FP_picture}. Hence we may switch between both. 

\begin{Rm}
We remark that we do not specify any domains of all considered operators in this section. So our subsequent analysis in this section is done at first algebraically.
\end{Rm}

\subsection{Hypocoercivity}

We closely follow \cite[Sec.~1.3]{DMS11} and \cite{DKMS11}. Remember the general setup and notations introduced in Section \ref{subsection_Motivation_and_setup}. A convenient choice for the underlying Hilbert space is 
\begin{align*}
L^2(\mu)=L^2(\mathbb{M}, \mu),~\left( g, h \right)_{L^2(\mu)} = \int_M g h \, \mathrm{d}\mu,
\end{align*}
with $\mathbb{M}=\mathbb{R}^d \times \mathbb{S}^{d-1}$, $d \geq 2$, and $\mu$ as in Section \ref{subsection_Motivation_and_setup}. For further motivation on this choice, the interested reader may also consult e.g.~\cite{GK08} or \cite{CG08}. The corresponding norm on $L^2(\mu)$ is denoted by $\| \cdot \|_{L^2(\mu)}$. The Kolmogorov operator $L^{\text{K}}=L$ associated to our $d$-dimensional fiber lay-down SDE is written in the form $L=S-A$ where
\begin{align*}
S=\frac{1}{2} \sigma^2 \, \Delta_{\mathbb{S}^{d-1}},~A= - v \cdot \nabla_\xi + \text{grad}_{\mathbb{S}^{d-1}} V,~V(\xi,v)= \frac{1}{d-1} \,\nabla \Phi(\xi) \cdot v.
\end{align*}
Now Lemma \ref{Lm_adjoint_V} from the Appendix and the choice of $\mu$ implies that $A$ is antisymmetric on $L^2(\mu)$, $S$ is symmetric and negative semi-definite on $L^2(\mu)$ and we have
\begin{align} \label{eq_invariant_measure}
\int_\mathbb{M} Lu \, \mathrm{d}\mu = 0.
\end{align} 
Finally, in the following, the Kolmogorov PDE 
\begin{align} \label{Kolmogorov_PDE}
\partial_t u = L u= Su - Au
\end{align}
is considered as an abstract Cauchy problem in $L^2(\mu)$. Its solution subject to the initial condition $u_0 \in L^2(\mu)$ is denoted by $u(t)$, $t \geq 0$. Motivated by Section \ref{subsection_Motivation_and_setup} we have to study exponential convergence of $u(t)$ towards $\left( u_0,1 \right)_{L^2(\mu)}=\int_\mathbb{M} u_0 \,\mathrm{d}\mu$ in $L^2(\mu)$ as $t \rightarrow \infty$. We just remark that $u(t)$, $t \geq 0$, when starting with \eqref{Kolmogorov_PDE}, can really be given a rigouros meaning as some $\mathbb{E}^x[u_0(X_t)]$. This fact is justified by the theory of (generalized) Dirichlet forms, see e.g.~\cite{Tru05}.

\begin{Rm} \label{Rm_equivalence_Kolmogorov_FP_picture}
$ $
\begin{enumerate}
\item[(i)]
Let us explain the equivalence between the Kolmogorov and the Fokker-Planck $L^2$-setting. Following \cite{DKMS11}, the underlying Hilbert space in the Fokker-Planck picture is just $L^2(\widetilde{\mu})=L^2(\mathbb{M}, \widetilde{\mu})$ with $\widetilde{\mu}=e^{\Phi} \mathrm{d}\xi \otimes \nu$. Now consider the Hilbert space isomorphism 
\begin{align*}
T: L^2(\mu) \rightarrow L^2(\widetilde{\mu}),~h \mapsto Th,~Th(\xi,v) := e^{-\Phi(\xi)} h(\xi,-v).
\end{align*}
One readily checks that (algebraically) it holds $T L^{\text{K}} T^{-1} = L^{\text{FP}}$. Hence $u(t),~t \geq 0,$ solves the abstract (Kolmogorov) Cauchy problem $\partial_t u = L^{\text{K}}u$ in $L^2(\mu)$ with initial condition $u(t=0)=u_0$ if and only if $f(t),~t \geq 0,$ solves the abstract (Fokker-Planck) Cauchy problem $\partial_t f = L^{\text{FP}}f$ in $L^2(\widetilde{\mu})$ with initial condition $f(t=0)=f_0$. Here $f(t):=T u(t)$, $t \geq 0$. Moreover, note that
\begin{align*}
\left\| u(t) - \left( u_0, 1 \right)_{L^2(\mu)} \right\|_{L^2(\mu)} = \left\| f(t) - F \left( f_0, F \right)_{L^2(\widetilde{\mu})} \right\|_{L^2(\widetilde{\mu})},
\end{align*}
i.e., we have $\left\| u(t) - \int_\mathbb{M} u_0 \, \mathrm{d}\mu \, \right\|_{L^2(\mu)} = \left\| f(t) - F \int_\mathbb{M} f_0 \,\mathrm{d}\xi \mathrm{d}\nu \right\|_{L^2(\widetilde{\mu})}$. This shows that we may equivalently switch to the Fokker-Planck $L^2$-setting in Theorem \ref{Thm_Hypocoercivity} below.
\item[(ii)]
By integrating the identity $u(t) - u_0 = \int_0^t Lu(s) \, \mathrm{d}s$ with respect to $\mu$ and using \eqref{eq_invariant_measure} we obtain 
\begin{align} \label{eq_mu_invariant_measure}
\left( u(t), 1 \right)_{L^2(\mu)} = \int_\mathbb{M} u(t) \, \mathrm{d}\mu = \int_\mathbb{M} u_0 \, \mathrm{d}\mu= \left( u_0, 1 \right)_{L^2(\mu)}.
\end{align}
This has the interpretation that $\mu$ is an invariant measure. On the other hand \eqref{eq_mu_invariant_measure} implies $\left( f(t), F \right)_{L^2(\widetilde{\mu})} = \left( f_0, F \right)_{L^2(\widetilde{\mu})}$, that is, $\int_\mathbb{M} f(t) \, \mathrm{d}\xi \mathrm{d}\nu = \int_\mathbb{M} f_0 \, \mathrm{d}\xi \mathrm{d}\nu$. So in the Fokker-Planck $L^2$-setting, this just means that mass stays conserved.
\end{enumerate}
\end{Rm}

Next we fix the conditions on the potential $\Phi$ analogously to \cite{DKMS11}. Let $d \in \mathbb{N}$, $d \geq 2$. We denote by $H^{k}(e^{-\Phi} \mathrm{d}\xi)$, $k \in \mathbb{N}$, the space of all $k$-times weakly differentiable functions $h:\mathbb{R}^d \rightarrow \mathbb{R}$ whose derivatives up to order $k$ (including the function itself) are elements of $L^2(e^{-\Phi}\mathrm{d}\xi)$. Further, $\nabla^2_\xi$ or $\nabla^2$ stands for the Hessian matrix in $\mathbb{R}^d$.
\begin{enumerate}
\item[(H1)]
\textit{Regularity:} $\Phi \in W^{2,\infty}_{\text{loc}}(\mathbb{R}^d)$.
\item[(H2)]
\textit{Normalization:} $\int_{\mathbb{R}^d} e^{-\Phi}\,\mathrm{d}\xi =1$.
\item[(H3)]
\textit{Spectral gap condition:} There exists a positive constant $\Lambda < \infty$ such that
\begin{align*}
\int_{\mathbb{R}^d} \left| \nabla u \right|^2  e^{-\Phi}  \, \mathrm{d}\xi \geq \Lambda \int_{\mathbb{R}^d} u^2 e^{-\Phi} \, \mathrm{d}\xi
\end{align*}
for all $u \in H^{1}(e^{-\Phi} \mathrm{d}\xi)$ with $\int_{\mathbb{R}^d} u \,e^{-\Phi} \, \mathrm{d}\xi=0$.
\item[(H4)]
\textit{Pointwise condition:} There exists a constant $C < \infty$ such that
\begin{align*}
\left| \nabla^2 \Phi (\xi) \right| \leq C \left( 1+ \left| \nabla \Phi(\xi) \right|\right) \mbox{ for all } \xi \in \mathbb{R}^d.
\end{align*}
\end{enumerate}
 
\begin{Rm} \label{Rm_Elliptic_Regularity}
Let $d \in \mathbb{N}$, $d \geq 2$. In analogy to \cite{DKMS11} we need elliptic regularity estimates from \cite[Sec.~2]{DMS11}.  This $L^2 \rightarrow H^2$ regularity result requires the full strength of all conditions on $\Phi$. So assume (H1)-(H4) and let $f  \in L^2(e^{-\Phi} \mathrm{d}\xi)$. Assume that $u \in L^2(e^{-\Phi} \mathrm{d}\xi)$ with $\int_{\mathbb{R}^d} u \, e^{-\Phi} \, \mathrm{d}\xi=0$ solves the elliptic equation
\begin{align*}
- \frac{1}{d} \Delta u + \frac{1}{d} \nabla \Phi \cdot \nabla u + u =f.
\end{align*}
Then \cite[Prop.~5]{DMS11} together with \cite[Lem.~8]{DMS11} implies the estimates
\begin{align*}
&\|\nabla^2 u \|_{L^2(e^{-\Phi} \mathrm{d}\xi)} \leq C_1 \, \| f \|_{L^2(e^{-\Phi}\mathrm{d}\xi)} \\
&\| \left| W \right| \left|\nabla u\right| \|_{L^2(e^{-\Phi}\mathrm{d}\xi)} \leq C_2 \, \| f \|_{L^2(e^{-\Phi}\mathrm{d}\xi)}
\end{align*}
with $W=\sqrt{1+ \frac{1}{4 d} \left| \nabla \Phi \right|^2}$. Here $C_1, C_2 < \infty$ are constants independent of $f$ and $u$. More precisely, the elliptic equation from \cite[Sec.~2]{DMS11} is of the form $\omega_0^2 \, u - \nabla \cdot \left(  \omega_1^2 \, \nabla u \right) = \omega_0^2 \,f$. Note that the latter reduces for $\omega_0^2=e^{-\Phi}$ and $\omega_1^2=\frac{1}{d} e^{-\Phi}$ to the one from above. Now in order to apply \cite[Prop.~5,~Lem.~8]{DMS11}, Conditions (2.1), (2.4), (2.5), (2.6) and (2.7) from \cite{DMS11} must hold true. Indeed, consider \cite{DKMS11} and \cite[Sec.~3]{DMS11} for their verification.
\end{Rm}

After this preparation, we end up with the final theorem. 

\begin{Thm} \label{Thm_Hypocoercivity}
Let $d \in \mathbb{N}$, $d \geq 2$, and $\sigma > 0$. Assume conditions (H1)-(H4). Then, for every $\eta > 0$, the solution $u(t)$, $t \geq 0$, to the abstract Cauchy problem \eqref{Kolmogorov_PDE} in $L^2(\mu)$ with initial condition $u(t=0)=u_0 \in L^2(\mu)$ satisfies
\begin{align*}
\left\|u(t) - \left( u_0,1 \right)_{L^2(\mu)} \right\|_{L^2(\mu)}  \leq (1+ \eta) \left\| u_0 - \left( u_0,1 \right)_{L^2(\mu)} \right\|_{L^2(\mu)}   e^{-\lambda t}.
\end{align*}
Herein $\lambda$ is given by
\begin{align*}
\lambda=\frac{\eta}{1 + \eta} \, \frac{K_1 \, \sigma^2}{1+K_2 \, \sigma^2 + K_3 \, \sigma^4}
\end{align*}
and the constants $K_j < \infty$, $j=1, 2, 3$, are only depending on the potential $\Phi$.
\end{Thm}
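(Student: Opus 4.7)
My plan is to apply the abstract hypocoercivity theorem of \cite{DMS11} in its Hilbert space form, mirroring the strategy of \cite{DKMS11} but carrying out the sphere-valued geometry in arbitrary dimension $d\ge 2$. The underlying Hilbert space is $L^2(\mu)$ and the generator is split as $L=S-A$ with
\begin{align*}
S=\tfrac{\sigma^2}{2}\Delta_{\mathbb{S}^{d-1}},\qquad A=-v\cdot\nabla_\xi+\text{grad}_{\mathbb{S}^{d-1}}V.
\end{align*}
Using Lemma~\ref{Lm_adjoint_V} from the Appendix and the product structure of $\mu$, I would first check the algebraic prerequisites: $S$ is symmetric and non-positive, $A$ is antisymmetric, $PAP=0$, and $S$ and $A$ act on a suitable core $\mathcal{D}$ of smooth cylinder functions. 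Let $P:L^2(\mu)\to L^2(\mu)$ denote the orthogonal projection onto $\ker S$, which acts as the conditional expectation with respect to $v$, so that $Pu(\xi,v)=\bar u(\xi):=\int_{\mathbb{S}^{d-1}}u(\xi,v')\,\mathrm{d}\nu(v')$. In particular $APu(\xi,v)=-v\cdot\nabla_\xi \bar u(\xi)$ because $\text{grad}_{\mathbb{S}^{d-1}}V\cdot\nabla_v$ kills $v$-independent functions, and $PAPu=0$ since $\int_{\mathbb{S}^{d-1}}v\,\mathrm{d}\nu=0$.

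Next I would verify the two coercivity hypotheses of \cite{DMS11}. For microscopic coercivity one uses the classical Poincaré inequality on the unit sphere: the first nontrivial eigenvalue of $-\Delta_{\mathbb{S}^{d-1}}$ is $d-1$, so $-(Su,u)_{L^2(\mu)}=\tfrac{\sigma^2}{2}\|\text{grad}_{\mathbb{S}^{d-1}}u\|_{L^2(\mu)}^2\ge \tfrac{\sigma^2(d-1)}{2}\|(I-P)u\|_{L^2(\mu)}^2$. For macroscopic coercivity, using $\int_{\mathbb{S}^{d-1}}v_iv_j\,\mathrm{d}\nu=\tfrac{1}{d}\delta_{ij}$ and (H3) I compute
\begin{align*}
\|APu\|_{L^2(\mu)}^2=\tfrac{1}{d}\int_{\mathbb{R}^d}|\nabla_\xi\bar u|^2 e^{-\Phi}\mathrm{d}\xi\ge \tfrac{\Lambda}{d}\|Pu\|_{L^2(\mu)}^2
\end{align*}
for $u$ with $(u,1)_{L^2(\mu)}=0$.

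The core of the argument is then to introduce the auxiliary operator $B=(I+(AP)^\ast(AP))^{-1}(AP)^\ast$ and the modified functional
\begin{align*}
\mathrm{H}[u]=\tfrac{1}{2}\|u\|_{L^2(\mu)}^2+\varepsilon\,(Bu,u)_{L^2(\mu)},
\end{align*}
which for small $\varepsilon>0$ is equivalent to the squared $L^2(\mu)$-norm on the mean-zero subspace. Differentiating $\mathrm{H}[u(t)]$ along the Cauchy problem \eqref{Kolmogorov_PDE} and collecting terms one obtains the standard five contributions: a dissipation from $S$, a contribution from $BAP$ (producing positivity via macroscopic coercivity), and three remainder terms involving $(I-P)AB$, $BA(I-P)$, and $BS$. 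Bounding these remainders is where the heavy lifting sits: for the $d$-dimensional fiber lay-down, the elliptic estimates recalled in Remark~\ref{Rm_Elliptic_Regularity}, applied to the equation $-\tfrac{1}{d}\Delta g+\tfrac{1}{d}\nabla\Phi\cdot\nabla g+g=f$ that defines $B$ on the macroscopic component, yield uniform bounds of the form $\|\,|W||\nabla g|\,\|_{L^2(e^{-\Phi}\mathrm{d}\xi)}+\|\nabla^2 g\|_{L^2(e^{-\Phi}\mathrm{d}\xi)}\le C\|f\|_{L^2(e^{-\Phi}\mathrm{d}\xi)}$. Using these estimates together with the moment identities for $v\otimes v\otimes v$ on $\mathbb{S}^{d-1}$ (computed in the Appendix), (H4) is used to absorb the Hessian of $\Phi$ against the weight $W$, giving boundedness of the three remainder terms on all of $L^2(\mu)$.

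Putting the pieces together, the abstract Theorem of \cite{DMS11} produces an entropy inequality $\tfrac{\mathrm{d}}{\mathrm{d}t}\mathrm{H}[u(t)]\le -2\lambda\,\mathrm{H}[u(t)]$ on the mean-zero subspace, with $\lambda>0$ depending explicitly on $\sigma$, on the microscopic/macroscopic coercivity constants above, and on the norms of the auxiliary operators, which only involve the potential-dependent constants from Remark~\ref{Rm_Elliptic_Regularity}. Undoing the Gronwall inequality and choosing $\varepsilon$ so that $\mathrm{H}[\cdot]\le \tfrac{(1+\eta)^2}{2}\|\cdot\|_{L^2(\mu)}^2$, $\mathrm{H}[\cdot]\ge \tfrac{1}{2}\|\cdot\|_{L^2(\mu)}^2$ yields the claimed decay estimate with the announced form of $\lambda$. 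I expect the main obstacle to be the proper identification of the constants $K_1,K_2,K_3$: this requires tracking the $\sigma$-dependence through the definition of $B$ and through the bounds on $BA(I-P)$ and $(I-P)AB$, in the manner of \cite{DKMS11}, and translating the resulting expression into the stated rational function of $\sigma$. The use of (H4) to close the bound on the Hessian term, and the need to verify that all smoothness/integration-by-parts manipulations are justified on a dense invariant core, are the remaining technical points one has to control carefully.
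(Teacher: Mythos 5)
Your proposal tracks the paper's own proof essentially step for step: the splitting $L=S-A$ on $L^2(\mu)$, the orthogonal projection onto $\ker S$, microscopic coercivity from the spherical Poincar\'e inequality with spectral gap $d-1$, macroscopic coercivity from the second moment identity $\int_{\mathbb{S}^{d-1}} v_iv_j\,\mathrm{d}\nu = \tfrac{1}{d}\delta_{ij}$ together with (H3), the auxiliary operator $B$ and the modified entropy functional, boundedness of the remainder operators $AB$, $BS$, $BA(I-\Pi)$ via the elliptic regularity estimates of Remark~\ref{Rm_Elliptic_Regularity} and (H4), and finally Gronwall together with a choice of $\varepsilon$ to extract the stated rational-in-$\sigma^2$ rate as in \cite{DKMS11}. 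One small imprecision: only \emph{second} spherical moments (Lemma~\ref{Gauss_formula_sphere}) enter the computation of $\Pi A^2\Pi$, not third moments of $v\otimes v\otimes v$, and the paper in addition exploits the exact identity $BS=-(d-1)\tfrac{\sigma^2}{2}B$ rather than merely a bound on $BS$; neither point affects the overall correctness of your plan.
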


\begin{proof}
\textit{Step 1:} In analogy to \cite{DKMS11} we start by realizing the program from \cite[Sec.~1.3]{DMS11}. Therefore, we choose the desired Hilbert space $\mathcal{H}$ as
\begin{align*}
\mathcal{H} = \left\{ g \in L^2(\mathbb{M},\mu) ~\Big|~ \left( g, 1 \right)_{L^2(\mu)} = \int_\mathbb{M} g \,\mathrm{d}\mu =0 \right\}.
\end{align*}
Next, we define the deviation $g(t)=u(t)- \left( u_0,1 \right)_{L^2(\mu)},~t \geq 0$. By $\eqref{eq_mu_invariant_measure}$ it follows that $g(t)$, $t \geq 0$, solves the abstract Cauchy problem \eqref{Kolmogorov_PDE} in the Hilbert space $\mathcal{H}$ subject to the initial condition $g(t=0)=u_0 - \left( u_0,1 \right)_{L^2(\mu)} \in \mathcal{H}$. Furthermore, denote the orthogonal projection to $\mathcal{N}(S)$ (the null space of $S$) by
\begin{align*}
\Pi g= \rho_g :=\int_{\mathbb{S}^{d-1}} g \, \mathrm{d}\nu.
\end{align*}
In particular, $\int_{\mathbb{R}^d} \rho_g \, e^{-\Phi} \, \mathrm{d}\xi=0$. Clearly, it holds $A \Pi g = - v \cdot \nabla_\xi \rho_g$ and hereby $\Pi A \Pi=0$. By \cite[Lem.~1]{DMS11}, the latter identity implies
\begin{align} \label{Boundedness_B}
\| Bg \|_{L^2(\mu)} \leq \frac{1}{2} \| (I - \Pi)g \|_{L^2(\mu)}
\end{align}
where $B$ is given as $B= (I + (A \Pi)^* A \Pi)^{-1} (A \Pi)^*$ and $I$ denotes the identity operator. For completeness, let us recapitulate the general strategy from \cite{DMS11} and \cite{DKMS11}.  First, the \textit{modified entropy functional} is defined as
\begin{align*}
H_\varepsilon[g]:= \frac{1}{2} \|g\|_{L^2(\mu)}^2 + \varepsilon \, \left( Bg,g \right)_{L^2(\mu)},~\varepsilon \in (0,1).
\end{align*}
Then $H_\varepsilon[g]$ is equivalent to $\|g\|_{L^2(\mu)}^2$, more precisely, \eqref{Boundedness_B} yields
\begin{align} \label{eq_equivalence_H}
\frac{1-\epsilon}{2} \|g\|_{L^2(\mu)}^2 \leq H_\varepsilon[g] \leq \frac{1+\epsilon}{2} \|g\|_{L^2(\mu)}^2.
\end{align}
Moreover, the evolution of $H_\varepsilon [g(t)]$ is given by $\frac{d}{dt} \,H_\varepsilon[g(t)] = -D_\varepsilon[g(t)]$ where $D_\varepsilon[g]$ is the \textit{entropy dissipation functional} 
\begin{align} \label{entropy_dissipation_functional} 
D_\varepsilon[g] = &-  \left( Sg,g \right)_{L^2(\mu)} + \varepsilon \left( BA \Pi g,g \right)_{L^2(\mu)} + \varepsilon \left( B A (I-\Pi)g,g \right)_{L^2(\mu)} \\~~~&- \varepsilon \left( A B g,g \right)_{L^2(\mu)} - \varepsilon  \left( BSg,g\right)_{L^2(\mu)}. \nonumber
\end{align}
The main step involves showing coercivity of $D_\varepsilon[g]$ for some appropriate $\varepsilon \in (0,1)$. Indeed, assume this to be true. Then there exists $\kappa > 0$ such that $D_\varepsilon[g] \geq \kappa \|g\|^2$ and hence by \eqref{eq_equivalence_H} we have $\frac{d}{dt} \,H_\varepsilon[g(t)] \leq -  \frac{2 \kappa}{1 + \varepsilon} H_\varepsilon[g(t)]$.  By using Gronwall`s lemma and again \eqref{eq_equivalence_H}, exponential convergence of $g(t)$ towards $0$ in $L^2(\mu)$ as $ t\rightarrow \infty$ follows. The desired rate of convergence is then finally be determined in step 4. \\
But first of all, we have to show coercivity $D_\varepsilon[\cdot]$ for some $\varepsilon \in (0,1)$ small enough.
\medskip

\textit{Step 2:} To do so, we start by verifying the \textit{microscopic and macroscopic coercivity} assumptions from \cite[Sec.~1.3]{DMS11}. The first one is satisfied due to the Poincar\'e inequality on $\mathbb{S}^{d-1}$
\begin{align*}
\frac{1}{d-1} \int_{\mathbb{S}^{d-1}}  \left( \text{grad}_{\mathbb{S}^{d-1}} h, \text{grad}_{\mathbb{S}^{d-1}} h \right)_{T\mathbb{S}^{d-1}}  \, \mathrm{d}\nu \geq \int_{\mathbb{S}^{d-1}}  h^2 \, \mathrm{d}\nu - \left( \int_{\mathbb{S}^{d-1}} h \, \mathrm{d}\nu \right)^2,  
\end{align*}
see \cite[Theo.~2]{Be89}. This yields
\begin{align*} 
- \left( S g, g \right)_{L^2(\mu)} \geq \frac{1}{2} \sigma^2 (d-1)  \left\| (I-\Pi)g \right\|_{L^2(\mu)}^2.
\end{align*}
Here $\left( \cdot, \cdot \right)_{T\mathbb{S}^{d-1}}$ is the usual Riemannian scalar product on the tangent bundle of the sphere. Using Lemma \ref{Gauss_formula_sphere} from the Appendix below, which is just a simple application of the Gaussian integral formula, and (H3) we conclude
\begin{align*}
\left\| A\Pi g \right\|_{L^2(\mu)}^2 &= \int_{\mathbb{R}^d} \int_{\mathbb{S}^{d-1}} (v \cdot \nabla_\xi \rho_g)^2 \,e^{-\Phi} \, \mathrm{d}\nu(v) \, \mathrm{d}\xi  \\
&= \frac{1}{d} \int_{\mathbb{R}^d} \left| \nabla_\xi \rho_g \right|^2 e^{-\Phi} \, \mathrm{d}\xi \geq   \frac{\Lambda}{d} \left\|  \Pi g \right\|_{L^2(\mu)}^2.
\end{align*}
The latter inequality is the desired macroscopic coercivity property. Henceforth, see \cite[Sec.~1.3]{DMS11}, we have
\begin{align*}
\left( BA \Pi g, g \right)_{L^2(\mu)} \geq \frac{\Lambda}{d+\Lambda} \left\| \Pi g\right\|_{L^2(\mu)}^2.
\end{align*}
So the sum of the first two terms in \eqref{entropy_dissipation_functional} is coercive. Now as described in \cite{DMS11} and \cite{DKMS11}, coercivity of $D_\varepsilon$ for $\varepsilon$ small enough follows if we can show that the operators $BA(I-\Pi)$, $AB$ and $BS$ are bounded and satisfy certain estimates. This is done next.
\medskip

\textit{Step 3:} Again we proceed as in \cite{DKMS11}. First of all, boundedness of $AB$ is automatically satisfied due to our previous conditions, see \cite[Lem.~1]{DMS11}. More precisely, we even have
\begin{align*} 
\left\| ABg \right\|_{L^2(\mu)} \leq \left\| (I-\Pi)g\right\|_{L^2(\mu)}.
\end{align*}
Furthermore, by using Formula \eqref {formula_n_sphere_2} from the Appendix, we conclude
\begin{align*}
(A\Pi)^*g=-\Pi Ag &= \nabla_\xi \cdot \int_{\mathbb{S}^{d-1}} v \, g\, \mathrm{d}\nu(v) - \int_{\mathbb{S}^{d-1}} \text{grad}_{\mathbb{S}^{d-1}} V(v) \,\mathrm{d}\nu(v) \\
&= \Big( \nabla_\xi - \nabla_\xi \Phi \Big) \cdot \int_{\mathbb{S}^{d-1}} v \,g \, \mathrm{d}\nu(v).
\end{align*}
This together with Equation \eqref{formula_eigenvalue_n_sphere} yields $(A\Pi)^*Sg = -(d-1) \frac{\sigma^2}{2} (A \Pi)^*g$. Consequently, $BS=-(d-1) \frac{\sigma^2}{2} B$ and therefore
\begin{align*} 
\left\|BSg\right\|_{L^2(\mu)} \leq (d-1) \frac{\sigma^2}{4} \left\|(I-\Pi)g\right\|_{L^2(\mu)}.
\end{align*}
Finally, boundedness of $BA$ is equivalent to boundedness of $(BA)^*$. To verify the latter, we make use of the elliptic regularity result mentioned before. First we calculate
\begin{align}\label{eq_calculation_hypo}
A^2 \Pi h &= A^2 \rho_h = - A ( v \cdot \nabla_\xi \rho_h) = \left( v, \nabla_\xi^2 \rho_h \, v \right)_{\text{euc}} -  \frac{1}{d-1} \left( \left(I-v\otimes v\right) \nabla_\xi \Phi, \nabla_\xi \rho_h \right)_{\text{euc}} \\
&= \left( v, \nabla_\xi^2  \rho_h \, v \right)_{\text{euc}} - \frac{1}{d-1} \left( \nabla_\xi \Phi, \nabla_\xi \rho_h \right)_{\text{euc}} + \frac{1}{d-1}\left( v, \nabla_\xi \Phi \right)_{\text{euc}} \left( v, \nabla_\xi \rho_h \right)_{\text{euc}}. \nonumber
\end{align}
Thus by the Gaussian integral formula, see Lemma \ref{Gauss_formula_sphere}, we get $\Pi A^2 \Pi h = \frac{1}{d} \Delta_\xi \rho_h - \frac{1}{d}  \nabla_\xi \Phi \cdot \nabla_\xi \rho_h$. For $g \in L^2(\mathbb{M},\mu)$, let $h=(I+(A\Pi)^*(A\Pi))^{-1}g$. The previous calculation yields
\begin{align} \label{elliptic_equation_proof_hypo}
\rho_g = \Pi h - \Pi A^2 \Pi h = - \frac{1}{d} \Delta_\xi \rho_h + \frac{1}{d}  \nabla_\xi \Phi \cdot \nabla_\xi \rho_h + \rho_h.
\end{align}
Note that $(BA)^*g=A^2 \Pi h$. Using \eqref{eq_calculation_hypo} and applying the elliptic regularity result from \cite{DMS11} (see Remark \ref{Rm_Elliptic_Regularity}) to Equation \eqref{elliptic_equation_proof_hypo}, we get
\begin{align*}
\|(BA)^*g\|_{L^2(\mu)}  &\leq \| |\nabla_\xi^2  \rho_h| \|_{L^2(e^{-\Phi}\mathrm{d}\xi)} + \frac{1}{d-1} \| |\nabla_\xi \Phi| |\nabla_\xi \rho_h| \|_{L^2(e^{-\Phi}\mathrm{d}\xi)} \\
&\leq C \, \|\rho_g\|_{L^2(e^{-\Phi}\mathrm{d}\xi)} \leq K \, \|g\|_{L^2(\mu)}
\end{align*}
for some constant $K < \infty$ independent of $g$. Thus also $\|BA g\|_{L^2(\mu)} \leq K \| g\|_{L^2(\mu)}$ and
\begin{align*} 
\|BA (I-\Pi) g\|_{L^2(\mu)} \leq K \|(I-\Pi) g\|_{L^2(\mu)}.
\end{align*}

\textit{Step 4:} By \cite[Sec.~1.3]{DMS11} and the previous steps we infer coercivity of $D_\varepsilon$ for some $\varepsilon \in (0,1)$ small enough. As explained at the end of step 1, this yields already exponential convergence.  It remains to verify the claimed rate of convergence. To do so, just copy the calculation from subsection 3.4 in \cite[Theo.~1]{DKMS11} via replacing $D$, $D:= \frac{\sigma^2}{2}$, through $\widetilde{D}:=(d-1)\frac{\sigma^2}{2}$ and $(2+\Lambda)$ through $d+\Lambda$ in the latter. Furthermore, note that in subsection 3.4 from \cite[Theo.~1]{DKMS11} the expression $1+C_2D^2$ can be replaced by some term of the form $1+C_3D +C_4D^2$ with $C_3, C_4$ being some finite constants not depending on $\sigma$. Having this in mind, the desired rate of convergence of the theorem is shown.
\end{proof}

\begin{Rm}
A solution to the (Kolmogorov of Fokker-Planck) Cauchy problem in case $\Phi \in C^\infty(\mathbb{R}^d)$ can easily be constructed using the well-known Heffer-Nier construction scheme, see \cite[Prop.~5.5]{HN05}. This result can then easily be generalized to the case of $\Phi$ being locally Lipschitz continuous (and bounded from below) as in \cite{CG10}. Details on this and further analytic approaches for proving convergence to equilibrium of the $d$-dimensional fiber lay-down process are discussed in a forthcoming publication of the first named and the last named author of the underlying article.
\end{Rm}

\section{Appendix} \label{Appendix}

For completeness, in this section we just prove some specific statements needed for our analysis in this paper. We start with the first one.

\begin{Lm} \label{Lm_formula_sphere_strong_feller}
Let $d \in \mathbb{N}$. $I_{\mathbb{S}^d}$ denotes the function $v \mapsto v$, $v \in \mathbb{S}^{d}$, where $\Delta_{\mathbb{S}^d}I_{\mathbb{S}^d}$ is understood componentwise. Then it holds
\begin{align} \label{formula_eigenvalue_n_sphere}
\Delta_{\mathbb{S}^d} I_{\mathbb{S}^d}= - d \,I_{\mathbb{S}^d},
\end{align}
\end{Lm}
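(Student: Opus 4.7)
The plan is to exploit the relationship between the ambient Euclidean Laplacian on $\mathbb{R}^{d+1}$ and the spherical Laplacian $\Delta_{\mathbb{S}^{d}}$ intrinsic to $\mathbb{S}^{d}$. Fix $i \in \{1,\ldots,d+1\}$ and consider the $i$-th component function $v \mapsto v_i$ on $\mathbb{S}^{d}$. Its most natural smooth extension to $\mathbb{R}^{d+1}$ (in the sense of the identification discussed in Section \ref{Setup_and_Notations}) is simply $\widetilde{f}(x) = x_i$.

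First I would recall the polar decomposition of the Laplacian on $\mathbb{R}^{d+1} \setminus \{0\}$,
\begin{align*}
\Delta_{\mathbb{R}^{d+1}} = \partial_r^2 + \frac{d}{r}\,\partial_r + \frac{1}{r^2}\, \Delta_{\mathbb{S}^{d}},
\end{align*}
where $r = |x|$ and $\Delta_{\mathbb{S}^{d}}$ acts in the angular variable $v = x/|x|$. This is a standard identity that can either be cited or derived directly by orthogonally splitting $\nabla_{\mathbb{R}^{d+1}}$ into its radial and tangential parts (the latter being $\mathrm{grad}_{\mathbb{S}^{d}}$ after identification, as in Section \ref{Setup_and_Notations}) and applying the product rule to the divergence.

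Next I would apply this identity to $\widetilde{f}(x)=x_i$. Since $\widetilde{f}$ is linear, $\Delta_{\mathbb{R}^{d+1}}\widetilde{f} = 0$. Since $\widetilde{f}$ is homogeneous of degree one, on $\mathbb{S}^{d}$ one has $\partial_r \widetilde{f}(v) = v_i$ and $\partial_r^2 \widetilde{f}(v) = 0$. Evaluating the polar formula at $r=1$ yields
\begin{align*}
0 \;=\; 0 \;+\; d\, v_i \;+\; \Delta_{\mathbb{S}^{d}} v_i,
\end{align*}
so $\Delta_{\mathbb{S}^{d}} v_i = -d\, v_i$. Repeating this componentwise for $i=1,\ldots,d+1$ establishes $\Delta_{\mathbb{S}^{d}} I_{\mathbb{S}^{d}} = - d\, I_{\mathbb{S}^{d}}$.

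The only non-trivial ingredient is the polar decomposition of $\Delta_{\mathbb{R}^{d+1}}$, and even this is entirely classical; everything else reduces to a one-line evaluation. Alternatively, one can bypass polar coordinates and work directly with the intrinsic formula $\Delta_{\mathbb{S}^{d}} f = \Delta_{\mathbb{R}^{d+1}} \widetilde{f} - \partial_r^2 \widetilde{f} - d\, \partial_r \widetilde{f}$ (valid on $\mathbb{S}^{d}$ for arbitrary smooth extensions), which is just a rearrangement of the same identity and leads to the same computation.
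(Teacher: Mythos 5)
Your proof is correct, but it takes a genuinely different route from the one in the paper. The paper uses the second-order H\"{o}rmander representation $\Delta_{\mathbb{S}^{d}} = \sum_{j=1}^{d+1}\mathcal{V}_j^2$ with $\mathcal{V}_j(v) = (I - v\otimes v)e_j$ (citing \cite[Theo.~3.1.4]{Hsu02}), and then evaluates $\sum_j \mathcal{V}_j(\mathcal{V}_j I_{\mathbb{S}^d})$ componentwise by a short but somewhat tedious index computation. This choice is natural in context: the same vector fields $\mathcal{V}_j$ appear as the noise fields $\mathcal{N}_j$ in the fiber lay-down SDE \eqref{Def_vector_fields_N_j}, so the computation doubles as a warm-up for the Lie-bracket estimates in Lemma \ref{Lm_Strong_Feller} and keeps the whole paper inside one coherent toolkit. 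Your proof instead invokes the polar decomposition of $\Delta_{\mathbb{R}^{d+1}}$ and observes that the ambient extension $\widetilde f(x)=x_i$ is linear and degree-one homogeneous, so $\Delta_{\mathbb{R}^{d+1}}\widetilde f = 0$, $\partial_r^2\widetilde f = 0$, and $\partial_r\widetilde f|_{r=1} = v_i$; reading off the spherical term then gives the eigenvalue $-d$ in essentially one line. Your argument is shorter and more conceptual (it makes transparent that $v_i$ is a degree-one spherical harmonic with the standard eigenvalue $\ell(\ell+d-1)$, $\ell=1$), whereas the paper's argument is more elementary in its prerequisites and better integrated with the surrounding vector-field calculus. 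Both are complete and correct.
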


\begin{proof}
We make use of a specific representation formula for the Laplace Beltrami. So let $\mathcal{V}_j$ be the vector field on $\mathbb{S}^d$ given by $v \mapsto \mathcal{V}_j(v)$, with $\mathcal{V}_j(v) \in T_v\mathbb{S}^d$ the $j$-th row of the matrix $I-v \otimes v$, $v \in \mathbb{S}^d$. It holds $\sum_{j=1}^{d+1} \mathcal{V}_j^2 = \Delta_{\mathbb{S}^d}$, see \cite[Theo.~3.1.4.]{Hsu02}. Choose $1 \leq j \leq d+1$. Now easily $\mathcal{V}_jI_{\mathbb{S}^d}(v)=\mathcal{V}_j(v)$. Thus the $i$-th component of $\mathcal{V}_jI_{\mathbb{S}^d}(v)$ is just $P_{ij}(v):= \delta_{ij} - v_iv_j$. We have
\begin{align*}
\mathcal{V}_j (P_{ij})(v)&= \sum_{n=1}^{d+1} \left(\delta_{nj}-v_nv_j\right) \frac{\partial}{\partial v_n} (- v_i v_j) \\
&= \begin{cases}
  v_i v_j^2 - (1-v_j^2)v_i = -v_i + 2 v_i v_j^2  & \text{if } i \not= j \\
  -2v_i(1-v_i^2) & \text{if } i=j 
\end{cases}
\end{align*}
and therefore 
\begin{align*}
\sum_{j=1}^{d+1} \mathcal{V}_j (P_{ij})(v) = \sum_{j=1,~j\not=i}^{d+1} \left(-v_i + 2 v_i v_j^2\right) -2v_i(1-v_i^2) = -d \,v_i.
\end{align*}
Hence we get $\Delta_{\mathbb{S}_d} I_{\mathbb{S}^d} = \sum_{j=1}^d \mathcal{V}_j^2\,I_{\mathbb{S}^d} = -d \,I_{\mathbb{S}^d}$.
\end{proof}

The following proposition is used for computing the Fokker-Planck operator corresponding to our higher-dimensional fiber lay-down process. $\mathcal{S}$ denotes the Riemannian measure on $(\mathbb{S}^d,\mathcal{B}(\mathbb{S}^d))$ and recall that $C^2(\mathbb{S}^d)$ is dense in $L^2(\mathbb{S}^d,\mathcal{S})$. Here $C^2$ means twice continuously differentiable.

\begin{Pp} \label{Lm_adjoint_V}
Let $d \in \mathbb{N}$. V: $\mathbb{S}^d \rightarrow \mathbb{R}$ is defined by $V(v)= \left( z , v \right)_{\text{euc}} $, where $z \in \mathbb{R}^{d+1}$ is fixed. Define the vector field $\mathcal{A}$ by $\mathcal{A}= \text{grad}_{\mathbb{S}^d} V$ and consider the operator $L= \mathcal{A}$ with domain $C^2(\mathbb{S}^d)$. Then its adjoint operator $L^*$ in $L^2(\mathbb{S}^d, \mathcal{S})$ is given on $C^2(\mathbb{S}^d)$ as
\begin{align*}
L^*= -\mathcal{A} + d  \left( z , v \right)_{\text{euc}} 
\end{align*}
\end{Pp}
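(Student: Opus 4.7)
The plan is to reduce the computation to a Riemannian integration by parts on the closed manifold $\mathbb{S}^d$ and then apply Lemma \ref{Lm_formula_sphere_strong_feller}. The key identity I would invoke is the product rule $\text{div}_{\mathbb{S}^d}(fg\,\mathcal{A}) = fg\,\text{div}_{\mathbb{S}^d}(\mathcal{A}) + g\,\mathcal{A}(f) + f\,\mathcal{A}(g)$ for $f,g \in C^2(\mathbb{S}^d)$ and a smooth vector field $\mathcal{A}$. Since $\mathbb{S}^d$ has no boundary, the divergence theorem gives $\int_{\mathbb{S}^d}\text{div}_{\mathbb{S}^d}(fg\,\mathcal{A})\,\mathrm{d}\mathcal{S}=0$, which rearranges to
\begin{align*}
\int_{\mathbb{S}^d} g\,\mathcal{A}(f)\,\mathrm{d}\mathcal{S} = -\int_{\mathbb{S}^d} f\,\mathcal{A}(g)\,\mathrm{d}\mathcal{S} - \int_{\mathbb{S}^d} fg\,\text{div}_{\mathbb{S}^d}(\mathcal{A})\,\mathrm{d}\mathcal{S}.
\end{align*}
Reading off the adjoint from this identity immediately yields the algebraic formula
\begin{align*}
L^{*} = -\mathcal{A} - \text{div}_{\mathbb{S}^d}(\mathcal{A})
\end{align*}
on $C^2(\mathbb{S}^d)$, so the whole problem collapses to computing the scalar function $\text{div}_{\mathbb{S}^d}(\mathcal{A}) = \text{div}_{\mathbb{S}^d}(\text{grad}_{\mathbb{S}^d}V) = \Delta_{\mathbb{S}^d}V$.

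The remaining computation is precisely where Lemma \ref{Lm_formula_sphere_strong_feller} enters. Writing $V(v) = \sum_{i=1}^{d+1} z_i\,(I_{\mathbb{S}^d})_i(v)$, linearity of $\Delta_{\mathbb{S}^d}$ combined with the componentwise identity $\Delta_{\mathbb{S}^d} I_{\mathbb{S}^d} = -d\,I_{\mathbb{S}^d}$ gives
\begin{align*}
\Delta_{\mathbb{S}^d} V(v) = \sum_{i=1}^{d+1} z_i \,\Delta_{\mathbb{S}^d}(I_{\mathbb{S}^d})_i(v) = -d \sum_{i=1}^{d+1} z_i v_i = -d\,(z,v)_{\text{euc}}.
\end{align*}
Substituting this back, I obtain $L^{*} = -\mathcal{A} + d\,(z,v)_{\text{euc}}$, which is the claim.

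The only step requiring genuine care is justifying the integration by parts above in the stated generality; once that is in hand, everything else is an algebraic substitution plus the preceding lemma. If one prefers to avoid the abstract divergence theorem, a completely hands-on alternative is available using the frame $\mathcal{V}_j := (I-v\otimes v)e_j$ on $\mathbb{S}^d$ (as used in the proof of Lemma \ref{Lm_formula_sphere_strong_feller}), together with $\sum_{j=1}^{d+1}\mathcal{V}_j^{\,2} = \Delta_{\mathbb{S}^d}$. Expanding $\mathcal{A} = \sum_j \bigl(z \cdot \mathcal{V}_j(v)\bigr)\,\mathcal{V}_j$ and integrating each $\mathcal{V}_j$-derivative by parts against the surface measure $\mathcal{S}$ reduces the calculation to elementary polynomial identities on the sphere, so no genuine obstacle appears at any stage.
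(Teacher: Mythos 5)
Your argument is correct and is essentially the same as the paper's: both reduce the adjoint to the scalar $-\text{div}_{\mathbb{S}^d}(\mathcal{A})=-\Delta_{\mathbb{S}^d}V$ by integrating by parts on the boundaryless sphere, and then invoke Lemma \ref{Lm_formula_sphere_strong_feller} componentwise to get $\Delta_{\mathbb{S}^d}V=-d\,(z,v)_{\text{euc}}$. The paper phrases the integration by parts via the Green identity $\int(\text{grad}\,V,\text{grad}\,h)_{T\mathbb{S}^d}\,\mathrm{d}\mathcal{S}=-\int(\Delta_{\mathbb{S}^d}V)h\,\mathrm{d}\mathcal{S}$ rather than the divergence theorem on $fg\,\mathcal{A}$, but that is only a cosmetic difference.
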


\begin{proof}
Let $f,g \in C^2(\mathbb{S}^d)$. Clearly, $\int_{\mathbb{S}^d} \mathcal{A}(fg) \, \mathrm{d}\mathcal{S} = \int_{\mathbb{S}^d} \mathcal{A}(f) g \, \mathrm{d}\mathcal{S} + \int_{\mathbb{S}^d} f \mathcal{A}(g) \, \mathrm{d}\mathcal{S}$. So it suffices to show
\begin{align} \label{formula_n_sphere_2}
\int_{\mathbb{S}^d} \mathcal{A}(h) \, \mathrm{d}\mathcal{S} = d \int_{\mathbb{S}^d}  \left( z , v \right)_{\text{euc}}  h\, \mathrm{d} \mathcal{S}(v) ,~h \in C^2(\mathbb{S}^d).
\end{align}
So let $h \in C^2(\mathbb{S}^d)$. One has $\mathcal{A}(h) = \text{grad}_{\mathbb{S}^d}(V)(h) = \left(\text{grad}_{\mathbb{S}^d} V, \text{grad}_{\mathbb{S}^d} h\right)_{T\mathbb{S}^d}$ where $\left( \cdot, \cdot \right)_{T\mathbb{S}^d}$ denotes the canonically Riemannian scalar product on the tangent bundle $T\mathbb{S}^d$ of $\mathbb{S}^d$. Hence we get
\begin{align*} 
\int_{\mathbb{S}^d} \mathcal{A}(h) \, \mathrm{d}\mathcal{S} = \int_{\mathbb{S}^d} \left(\text{grad}_{\mathbb{S}^d} V, \text{grad}_{\mathbb{S}^d} h\right)_{T\mathbb{S}^d} \, \mathrm{d}\mathcal{S} = - \int_{\mathbb{S}^d} \left( \Delta_{\mathbb{S}^d}V \right) h \, \mathrm{d}\mathcal{S}.
\end{align*}
Finally, by Lemma \ref{Lm_formula_sphere_strong_feller} we have $\Delta_{\mathbb{S}^d}V = - d \left( z, I_{\mathbb{S}^d} \right)_{\text{euc}}$. Thus \eqref{formula_n_sphere_2} is shown and the claim follows.
\end{proof}

Moreover, we need one more lemma, which is just a simple application of the Gaussian integral formula.  $\nu$ denotes the normalized Riemannian measure of $\mathbb{S}^d$, i.e., $\nu=\frac{1}{\text{vol}(\mathbb{S}^d)} \mathcal{S}$ where $\text{vol}(\mathbb{S}^d)$ is the  surface area of $\mathbb{S}^d$.

\begin{Lm} \label{Gauss_formula_sphere}
Let $d \in \mathbb{N}$ and let $B$ be a matrix with entries $b_{ij} \in \mathbb{R}$, $i,j=1, \ldots, d+1$. Then 
\begin{align*}
\int_{\mathbb{S}^d} \left( Bv, v \right)_{\text{euc}} \,\mathrm{d}\nu(v) = \frac{1}{d+1} \, \sum_{j=1}^{d+1} b_{jj}.
\end{align*}
Hence for $z_1, z_2 \in \mathbb{R}^{d+1}$ we get $\int_{\mathbb{S}^d} \left(  z_1, v\right)_{\text{euc}} \left(  z_2, v \right)_{\text{euc}} \,\mathrm{d}\nu(v) = \frac{1}{d+1} \, \left( z_1, z_2 \right)_{\text{euc}}$.
\end{Lm}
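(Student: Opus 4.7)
The plan is to apply the divergence theorem (Gauss's formula) to the vector field $F: \mathbb{R}^{d+1} \to \mathbb{R}^{d+1}$, $F(x) := Bx$, on the closed unit ball $\overline{\mathbb{B}^{d+1}}$. The outward unit normal to $\mathbb{S}^d$ at $v$ is $v$ itself, so $F(v) \cdot v = (Bv, v)_{\text{euc}}$ on the boundary, while $\text{div}\,F(x) = \text{tr}(B) = \sum_{j=1}^{d+1} b_{jj}$ inside. The divergence theorem yields
\begin{equation*}
\int_{\mathbb{S}^d} (Bv,v)_{\text{euc}} \,\mathrm{d}\mathcal{S}(v) = \int_{\mathbb{B}^{d+1}} \text{tr}(B)\,\mathrm{d}x = \text{tr}(B)\,\text{vol}(\mathbb{B}^{d+1}).
\end{equation*}

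Then I would use the classical relation $\text{vol}(\mathbb{S}^d) = (d+1)\,\text{vol}(\mathbb{B}^{d+1})$ (which is itself a consequence of integrating $1$ in polar coordinates). Dividing both sides of the identity above by $\text{vol}(\mathbb{S}^d)$ and recalling that $\nu = \frac{1}{\text{vol}(\mathbb{S}^d)}\mathcal{S}$, one obtains
\begin{equation*}
\int_{\mathbb{S}^d} (Bv,v)_{\text{euc}} \,\mathrm{d}\nu(v) = \frac{1}{d+1}\sum_{j=1}^{d+1} b_{jj},
\end{equation*}
which is the first claim.

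For the second claim, I would simply specialize the first identity to the rank-one matrix $B := z_1 z_2^T$. Then $(Bv, v)_{\text{euc}} = (z_2, v)_{\text{euc}}\,(z_1, v)_{\text{euc}}$ and $\text{tr}(B) = \sum_j (z_1)_j(z_2)_j = (z_1, z_2)_{\text{euc}}$, so the formula follows immediately.

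There is no serious obstacle here; the only mild point is justifying $\text{vol}(\mathbb{S}^d) = (d+1)\,\text{vol}(\mathbb{B}^{d+1})$, which I would either cite as standard or derive in one line from $\text{vol}(\mathbb{B}^{d+1}) = \int_0^1 r^d\,\text{vol}(\mathbb{S}^d)\,\mathrm{d}r$. Alternatively, a completely coordinate-based proof would use permutation and reflection symmetries of $\mathbb{S}^d$: the reflection $v_i \mapsto -v_i$ gives $\int_{\mathbb{S}^d} v_iv_j\,\mathrm{d}\nu = 0$ for $i \neq j$, while permutation symmetry plus $\sum_i v_i^2 \equiv 1$ forces $\int_{\mathbb{S}^d} v_i^2\,\mathrm{d}\nu = \frac{1}{d+1}$; but this would duplicate the effort, so the divergence theorem route, which matches the lemma's own reference to the \emph{Gaussian integral formula}, is cleaner.
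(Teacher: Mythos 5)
Your proposal is correct and is essentially identical to the paper's own proof: both apply the divergence theorem to the linear vector field $x \mapsto Bx$ on the unit ball, invoke $\text{vol}(\mathbb{S}^d) = (d+1)\,\text{vol}_{d+1}(B_1)$, and obtain the second claim by specializing to a rank-one matrix $B$ (the paper writes $B = z_2 z_1^T$, you write $z_1 z_2^T$, which is immaterial).
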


\begin{proof}
Define $X:\mathbb{R}^{d+1} \rightarrow \mathbb{R}^{d+1}$ as $X(\xi)=B \,\xi$, $\xi \in \mathbb{R}^{d+1}$. The Gaussian integral formula then implies
\begin{align*}
\int_{\mathbb{S}^d}  \left( X(v) ,v \right)_{\text{euc}} \, \mathrm{d}\mathcal{S}(v) = \int_{B_1} \text{div}\, (X) (\xi) \, \mathrm{d}\xi = \text{vol}_{d+1}(B_1) \sum_{j=1}^{d+1} b_{jj}.
\end{align*}
Here $\text{vol}_{d+1}(B_1)$ denotes the Lebesgue volume of the $(d+1)$-dimensional unit ball $B_1$. By using the well-known relation $\text{vol}(\mathbb{S}^d)=(d+1) \, \text{vol}_{d+1}(B_1)$ the first claim follows.  For the second statement just set $B= z_2 \,z_1^T$.
\end{proof}

Finally, in order to do numerical simulations we have to compute some of our underlying objects in local coordinates. Therfore, recall the following spherical coordinate system $(U,x)$ given by $x= \tau_d^{-1}$ and $U= \text{Im}(\tau_d)$ with $\tau_{1}(\theta_1):= \left( \cos(\theta_1) \ \sin(\theta_1) \right)^T,~\theta_1 \in (0,2 \pi)$, and inductively for $d \in \mathbb{N}$, $d \geq 2$,
\begin{align*}
\tau_{d}:=\begin{pmatrix} \tau_{d-1}(\theta_1,\ldots,\theta_{d-1}) \sin(\theta_d) \\ \cos(\theta_d) \end{pmatrix},~\theta_d \in (0,\pi).
\end{align*}
Thus the Riemannian metric on $U$ is determined by $g^{(d)}_{ij}= \left( \partial_{\theta_i} \tau_d , \partial_{\theta_j} \tau_d \right)_{\text{euc}}$. So the density of the Riemannian volume measure $\sqrt{g}$ with $g=\text{det} \left( \left(g^{(d)}_{ij}\right) \right)$, denoted by $\varrho_d$, is given in this coordinate system by $\varrho_d = \prod_j^d | \partial_{\theta_j} \tau_d |$ since $g^{(d)}_{ij}=0$ for $i \not=j$. This yields the formula $\varrho_d = \varrho_{d-1} \sin^{d-1}(\theta_d),~d \geq 2$. In particular, 
\begin{align*}
\frac{\partial}{\partial \theta_j} \varrho_d = (j-1) \cot(\theta_j) \varrho_d,~d \in \mathbb{N},~j=1,\ldots,d.
\end{align*}
Define $n_j:=n_j^{(d)}:= {| \partial_{\theta_j} \tau_d |}^{-1}  \, \partial_{\theta_j} \tau_d,~j=1, \ldots, d$. Let $V$ be as in Lemma \ref{Lm_formula_sphere_strong_feller}. The gradient of $V$ is computed in local coordinates as $\text{grad}_{\mathbb{S}^d} V = \sum_{i,j=1}^{d} g^{ij} \frac{\partial V}{\partial x_j} \, \frac{\partial}{\partial x_i}$ where $(g^{ij})$ is the inverse matrix of $(g_{ij})$. Thus
\begin{align} \label{formula_gradient_local}
\text{grad}_{\mathbb{S}^d} V = \sum_{j=1}^d \mathcal{G}_j \left(z , n_j\right)_{\text{euc}} \frac{\partial}{\partial \theta_j}.
\end{align}
Here $\mathcal{G}_j:= {| \partial_{\theta_j} \tau_d |}^{-1} = \prod_{i=j+1}^d \frac{1}{\sin(\theta_i)}$ where the empty product in case $j=d$ is defined to be equal to $1$. Finally, for $\Delta_{\mathbb{S}^d}=\frac{1}{\sqrt{g}} \sum_{i,j=1}^d \frac{\partial}{\partial x_i} \left(\sqrt{g} g^{ij} \frac{\partial }{\partial x_j} \right)$ it holds
\begin{align} \label{formula_Beltrami_local}
\Delta_{\mathbb{S}^d} =\sum_{j=1}^d \mathcal{G}_j^2 \,\frac{\partial^2}{\partial \theta_j^2} + \sum_{j=1}^d \mathcal{G}_j^2 (j-1) \cot(\theta_j) \,\frac{\partial}{\partial \theta_j}.
\end{align}

\section*{Acknowledgement} This work has been supported by Bundesministerium f"{u}r Bildung und Forschung, Schwerpunkt \glqq Mathematik f"{u}r Innovationen in Industrie and Dienstleistungen\grqq , Projekt $03$MS$606$. The last named author thanks Benedict Baur, Florian Conrad, Benedikt Heinrich, Frank Seifried and Heinrich von Weizs"{a}cker for helpful discussions.

\end{document}